\documentclass[12pt]{amsart}
\usepackage{amssymb}
\newtheorem{Thm}{Theorem}[section]
\newtheorem{Prop}[Thm]{Proposition}
\newtheorem{Cor}[Thm]{Corollary}
\newtheorem{Lem}[Thm]{Lemma}
\theoremstyle{definition}
\newtheorem{Ex}[Thm]{Example}

\oddsidemargin=0cm
\evensidemargin=0cm
\textwidth=16cm
\textheight=232mm
\unitlength=1mm
\newcommand{\A}{\mathcal{A}}
\newcommand{\C}{\mathbb{C}}
\newcommand{\Orb}{\mathbb{O}}
\newcommand{\U}{\mathcal{U}}
\newcommand{\J}{\mathcal{J}}
\newcommand{\g}{\mathfrak{g}}
\newcommand{\lf}{\mathfrak{l}}
\newcommand{\p}{\mathfrak{p}}
\newcommand{\n}{\mathfrak{n}}
\newcommand{\Nilp}{\mathcal{N}}
\newcommand{\gl}{\mathfrak{gl}}
\newcommand{\HC}{\operatorname{HC}}
\newcommand{\Walg}{\mathcal{W}}
\newcommand{\Hom}{\operatorname{Hom}}
\newcommand{\B}{\mathcal{B}}
\newcommand{\Ext}{\operatorname{Ext}}
\newcommand{\Spec}{\operatorname{Spec}}
\newcommand{\q}{\mathfrak{q}}
\newcommand{\tf}{\mathfrak{t}}
\newcommand{\Weyl}{\mathbb{A}}
\newcommand{\slf}{\mathfrak{sl}}
\newcommand{\gr}{\operatorname{gr}}
\newcommand{\F}{\operatorname{F}}
\newcommand{\OCat}{\mathcal{O}}
\numberwithin{equation}{section}
\title{Quantizations of regular functions on nilpotent orbits}
\author{Ivan Losev}
\thanks{Supported by the NSF grant DMS-1161584}
\thanks{MSC 2010: 17B35 (primary), 53D55, 16G99 (secondary)}
\address{Department
of Mathematics, Northeastern University, Boston MA 02115 USA}
\email{i.loseu@neu.edu}
\begin{document}
\begin{abstract}
We study the quantizations of the algebras of regular functions on nilpotent orbits.
We show that such a quantization always exists and is unique if the orbit
is birationally rigid. Further we show that, for special birationally rigid orbits,
the quantization has integral central character in all cases but four (one orbit
in $E_7$ and three orbits in $E_8$). We use this to complete the computation
of Goldie ranks for primitive ideals with integral central character
for all special nilpotent orbits but one (in $E_8$). Our main ingredient
are results on the geometry of normalizations of the closures of nilpotent orbits
by Fu and Namikawa.
\end{abstract}
\maketitle
\section{Introduction}
\subsection{Nilpotent orbits and their quantizations}
Let $G$ be a connected semisimple algebraic group over $\C$ and let $\g$ be its Lie algebra.
Pick a nilpotent orbit $\Orb\subset \g$. This orbit is a symplectic algebraic variety
with respect to the Kirillov-Kostant form. So the algebra $\C[\Orb]$ of regular functions
on $\Orb$ acquires a Poisson bracket. This algebra is also naturally graded and the Poisson
bracket has degree $-1$. So one can ask about quantizations of $\Orb$, i.e., filtered
algebras $\A$ equipped with an isomorphism $\gr\A\xrightarrow{\sim}\C[\Orb]$ of graded Poisson
algebras.

We are actually interested in quantizations that have some additional structures mirroring
those of $\Orb$. Namely, the group $G$ acts on $\Orb$ and the inclusion $\Orb\hookrightarrow \g$
is a moment map for this action. We want the $G$-action on $\C[\Orb]$ to lift to
a filtration preserving action on $\A$. Further, we want a $G$-equivariant homomorphism
$U(\g)\rightarrow \A$ such that, for any $\xi\in \g$, the endomorphism
$[\xi,\cdot]:\A\rightarrow \A$ coincides with the differential of the $G$-action
(in other words, $\A$ has to be a Dixmier algebra). A motivation to consider quantizations
of $\C[\Orb]$ of this form comes from attempts to extend the orbit method to reductive 
groups, see, e.g., \cite{McG2} for details. 

We establish the existence of such a quantization $\A$ and we investigate the questions of when
$\A$ is unique and when the kernel of the map $U(\g)\rightarrow \A$ has integral central
character. The latter question is of importance for computing Goldie ranks of primitive
ideals with integral central characters in exceptional algebras (the case of classical
Lie algebras was settled in \cite{W_dim}). We elaborate on our results in the next section.

The questions above are closely related to the representation theory of finite W-algebras
introduced by Premet in \cite{Premet1}, see \cite{W_review,Ostrik_review,Wang_review} for reviews.
Each W-algebra is constructed from a pair $(\g,\Orb)$ of a semisimple Lie algebra $\g$
and a nilpotent orbit $\Orb\subset \g$. Dixmier algebras quantizing $\C[\Orb]$ are closely
related to one-dimensional modules over the W-algebra constructed from $(\g,\Orb)$ with
certain additional properties.

\subsection{Main results}\label{SS_Main_res}
Here are three lists of nilpotent orbits in exceptional Lie algebras that are all exceptional in some
further ways explained below in the paper.
\begin{itemize}
\item[(e1)] $\tilde{A}_1$ in $G_2$, $\tilde{A}_2+A_1$ in $F_4$, $(A_3+A_1)'$
in $E_7$, $A_3+A_1, A_5+A_1, D_5(a_1)+A_2$ in $E_8$.
\item[(e2)] $A_4+A_1$ in $E_7$, $A_4+A_1, E_6(a_1)+A_1$ in $E_8$.
\item[(e3)] $A_4+2A_1$ in $E_8$.
\end{itemize}

\begin{Thm}\label{Thm:main}
The following is true.
\begin{enumerate}
\item For any nilpotent orbit $\Orb$, there is a Dixmier algebra $\A$ quantizing
$\C[\Orb]$.
\item If $\Orb$ is birationally rigid (see Section \ref{SS_induct} below for a definition), then $\A$ in (1) is unique.
\item If $\Orb$ is special but not one of the four orbits in (e2),(e3),  then $\A$ from (ii) has integral central character. For orbits (e2) and (e3), $\A$ does not have integral central character.
\end{enumerate}
\end{Thm}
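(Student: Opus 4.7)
My plan is organized around the three parts of the theorem, with the common thread being the identification of Dixmier algebras quantizing $\C[\Orb]$ with $G$-equivariant filtered quantizations of the normalization $X:=\widetilde{\overline{\Orb}}$. The cited Fu--Namikawa results give that $X$ is a conical symplectic singularity carrying a Hamiltonian $G$-action whose moment map extends the inclusion $\Orb\hookrightarrow\g^{\ast}$.

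For (1) I would first handle birationally rigid $\Orb$. In that case Fu--Namikawa shows that $X$ is a $\mathbb{Q}$-factorial terminalization of $\overline{\Orb}$, and since $\Orb$ is open in $X$ with complement of codimension $\geq 2$ one has $\C[X]=\C[\Orb]$. Existence of a $G$-equivariant filtered quantization $\A$ of $X$ then follows from general results on conical symplectic singularities (Bezrukavnikov--Kaledin, Namikawa, Losev); to upgrade $\A$ to a Dixmier algebra I would quantize the moment map to produce a $G$-equivariant homomorphism $U(\g)\to\A$. For a general orbit, I would write $\Orb=\operatorname{Ind}_{\lf}^{\g}\Orb_{\lf}$ with $\Orb_{\lf}$ birationally rigid in a Levi $\lf$ (Lusztig--Spaltenstein) and obtain the desired algebra by parabolic/quantum Hamiltonian induction from a Dixmier algebra quantizing $\C[\Orb_{\lf}]$.

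For (2), assuming $\Orb$ birationally rigid, I would invoke the classification of filtered quantizations of a conical symplectic singularity: they are parametrized by a quotient of $H^{2}(X^{\mathrm{reg}},\C)$ by the associated Namikawa--Weyl group. Imposing $G$-equivariance together with the existence of a quantized comoment map $U(\g)\to\A$ compatible with the moment map cuts this parameter space down, and for birationally rigid $\Orb$ it should reduce to a single point, giving uniqueness.

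For (3), the central character of the kernel of $U(\g)\to\A$ can be read off from the quantization parameter via the Skryabin equivalence, which identifies $\A$ with a one-dimensional module over the finite W-algebra $\Walg(\Orb)$. For most special orbits one should be able to match the resulting primitive ideal with a special unipotent ideal in the sense of Barbasch--Vogan and Lusztig, for which integrality is automatic. The hard part will be the exceptional lists (e1), (e2), (e3): one must compute the quantization parameter explicitly from the codimension-$2$ symplectic leaves of $X$ and their transverse slice types (as tabulated by Fu--Namikawa), and then verify (non)integrality of the resulting central character. Establishing non-integrality for (e2) and (e3) is the most delicate step, since one must rule out that some choice of normalization or rescaling of the comoment map restores integrality.
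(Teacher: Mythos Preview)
Parts (1) and (2) are broadly on track and close in spirit to the paper, though the paper is more economical: it handles (1) uniformly for all orbits by quantizing $X^{\mathrm{reg}}$ for any $\mathbb{Q}$-factorial terminalization $X$ of $\operatorname{Spec}(\C[\Orb])$ and taking global sections (no separate parabolic-induction step), and for (2) it simply observes that for birationally rigid $\Orb$ one has $H^2_{DR}(X^{\mathrm{reg}})=0$ (via $\mathbb{Q}$-factoriality, cf.\ Lemma~\ref{Lem:DR_cohom}), so the Bezrukavnikov--Kaledin parameter space is already a single point without invoking a Namikawa--Weyl group or further equivariance constraints.

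Part (3), however, has a real gap. Your primary strategy---matching $\ker(U(\g)\to\A)$ with a Barbasch--Vogan special unipotent ideal---is precisely what the paper flags as \emph{conjectural} in the discussion following Lemma~\ref{Cor:1dim_unique}: the expected highest weight $\frac{1}{2}h^\vee$ is verified for classical types and for rigid special exceptional orbits, but the birationally-rigid-not-rigid special orbits ($A_2+A_1,\,A_4+A_1$ in $E_7$ and $A_4+A_1,\,A_4+2A_1$ in $E_8$) are exactly where it remains open, and (e2),(e3) sit among them. Your fallback of reading the quantization parameter from codimension-$2$ symplectic leaves also cannot work: for birationally rigid $\Orb$ the normalization $\operatorname{Spec}(\C[\Orb])$ is its own terminalization, the parameter space $H^2_{DR}$ vanishes, and the parameter is identically zero---it carries no information that could separate (e2),(e3) from the integral cases.

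The paper's argument for (3) is entirely different and never computes the central character directly. Its engine is an Equivalence Theorem (Theorem~\ref{Thm:equi}): when $\operatorname{codim}_{\overline{\Orb}}\partial\Orb\geq 4$, the functor $\bullet_\dagger:\HC^G_{\Orb}(\U)\to\HC^Q_{\mathrm{fin}}(\Walg)$ is an equivalence. Let $V$ be the $A$-stable one-dimensional $\Walg$-module corresponding to $\A$ and $U$ any finite-dimensional $\Walg$-module with integral central character; provided both are $Q^\circ$-equivariant one forms the $Q$-equivariant bimodule $Q*_{Q^\circ}\Hom_\C(U,V)$ and applies $\bullet^\dagger$ to obtain a nonzero Harish-Chandra $\U$-bimodule, which forces the left and right central characters to agree and hence both to be integral. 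The obstruction to $Q^\circ$-equivariance is governed by a single explicit weight $\delta$ (Proposition~\ref{Prop:integr_regular}), checked by hand to be integral for $A_2+A_1$ in $E_7$ (Example~\ref{Ex:E7}) and non-integral for $A_4+2A_1$ in $E_8$ (Example~\ref{Ex:E8}, giving (e3)); the orbits in (e2) are excluded separately because, by the classification in \cite{LO}, no $A$-stable one-dimensional $\Walg$-module with integral central character exists for them.
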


\begin{Cor}\label{Cor:Goldie}
Let $\J$ be a primitive ideal in $U(\g)$ with integral  central character. Suppose that the associated
orbit is not as in (e3). Then the Goldie rank of
$\J$ coincides with the dimension of the corresponding W-algebra module.
\end{Cor}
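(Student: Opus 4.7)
The plan is to derive Corollary~\ref{Cor:Goldie} from Theorem~\ref{Thm:main} by combining the existence, uniqueness and integrality statements there with the established dictionary between primitive ideals of $U(\g)$ and irreducible finite-dimensional modules of the relevant W-algebra, together with a reduction from arbitrary orbits to birationally rigid ones by parabolic induction.

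The first step is to reduce to the case where the associated orbit $\Orb$ of $\J$ is birationally rigid. If it is not, choose a Levi subalgebra $\lf\subset\g$ and a birationally rigid orbit $\Orb_\lf\subset\lf$ such that $\Orb$ is the birational induction of $\Orb_\lf$. By Losev's decomposition theorem for W-algebras under parabolic induction, irreducible finite-dimensional $\Walg(\g,\Orb)$-modules with integral central character correspond to irreducible finite-dimensional $\Walg(\lf,\Orb_\lf)$-modules with compatible central character, and their dimensions agree. On the primitive ideal side, primitive ideals with associated variety $\overline{\Orb}$ and integral central character are obtained by induction from primitive ideals of $U(\lf)$ with associated variety $\overline{\Orb_\lf}$ and integral central character, with Goldie ranks preserved through Joseph's additivity principle for induced ideals. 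This matches the two sides and reduces the corollary to the case of birationally rigid $\Orb$.

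Next, assume $\Orb$ is birationally rigid and not in (e3). By Theorem~\ref{Thm:main}(2), there is a unique Dixmier algebra $\A$ quantizing $\C[\Orb]$. Using Losev's functor $\bullet_{\dagger}$ from Harish-Chandra bimodules to $\Walg(\g,\Orb)$-bimodules, the bimodule $U(\g)/\J$ is sent to a finite-dimensional $\Walg(\g,\Orb)$-bimodule having the structure of a matrix algebra over $\Walg(\g,\Orb)/\mathrm{ann}(N)$, for $N$ the irreducible $\Walg$-module corresponding to $\J$. The size of the matrix records the Goldie rank of $\J$; uniqueness of the Dixmier quantization forces this multiplicity to be one, which yields Goldie rank equal to $\dim N$. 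Integrality of the central character for the primitive ideals relevant in the cases we allow is ensured by Theorem~\ref{Thm:main}(3).

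The main obstacle will be to match parabolic induction on the W-algebra side with induction of primitive ideals so that dimensions of simple modules and Goldie ranks are preserved in tandem, and to ensure that integrality of the central character propagates correctly through the reduction. The exclusion of (e3) reflects the fact that the birational descent for $A_4+2A_1$ in $E_8$ runs through an orbit for which the canonical Dixmier algebra fails to have integral central character (by Theorem~\ref{Thm:main}(3)), obstructing the propagation of integrality in the induction step.
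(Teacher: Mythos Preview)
Your proposal has genuine gaps in both steps, and differs structurally from the paper's proof.

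\textbf{Step 2 is the main problem.} You write that for birationally rigid $\Orb$, the functor $\bullet_\dagger$ sends $U(\g)/\J$ to a matrix algebra and ``uniqueness of the Dixmier quantization forces this multiplicity to be one.'' This is a non-sequitur. Uniqueness of the Dixmier algebra says only that there is exactly \emph{one} primitive ideal $\J_0$ with $\operatorname{mult}_{\Orb}(U(\g)/\J_0)=1$; it says nothing about the Goldie rank of an \emph{arbitrary} primitive ideal $\J$ with associated variety $\overline{\Orb}$. The relation between $\dim N$ and the Goldie rank of $\J$ is that their ratio is a positive integer (bounded in terms of $A$), and showing this ratio equals $1$ for all $\J$ is precisely the content of \cite[Theorem 1.3]{W_dim}, which the paper invokes as a black box. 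That theorem requires as input the existence of a one-dimensional $\Walg$-module (with integral central character, $A$-stable in the generic case); Theorem~\ref{Thm:main} supplies exactly that input. You are effectively trying to re-derive \cite[Theorem 1.3]{W_dim} from $\bullet_\dagger$ alone, and the argument you give does not do this.

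\textbf{Step 1 is also problematic.} Not every primitive ideal with associated variety $\overline{\Orb}$ is obtained by induction from a primitive ideal in $\lf$, so you cannot reduce the statement for all $\J$ to the birationally rigid case this way. Moreover, Joseph's additivity concerns Goldie-rank polynomials within a cell, not preservation of Goldie rank under induction as you use it. The paper does not reduce the \emph{statement} to birationally rigid orbits at all: for any special $\Orb$ not in (e2),(e3) it uses Theorem~\ref{Thm:main}(3), applied to a birationally rigid orbit from which $\Orb$ is birationally induced, together with parabolic induction for W-algebras, to produce an $A$-stable one-dimensional $\Walg$-module with integral central character, and then applies \cite[Theorem 1.3]{W_dim} directly to $\Orb$. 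For the three orbits in (e2) (two of which are themselves birationally rigid), the paper uses the parabolic-induction functor $\rho$ from \cite{miura} only to manufacture a (possibly $A$-unstable) one-dimensional $\Walg$-module with integral central character, which again suffices for \cite[Theorem 1.3]{W_dim}.
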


In \cite{W_dim} we have obtained basically Kazhdan-Lusztig type formulas for the dimensions
of the irreducible finite dimensional W-algebra modules with integral central character. 
So we can view Corollary \ref{Cor:Goldie} as a formula for the Goldie ranks. This corollary has
been already proved for classical types in \cite{W_dim}. 

\subsection{Content of the paper}
We start by recalling various properties of nilpotent orbits: their classification, the notion of a special
orbit, Lusztig-Spaltenstein induction and (birationally) rigid orbits, the structure of the boundary,
and $\mathbb{Q}$-factorial terminalizations.

In Section \ref{S_W_alg} we recall some known results about W-algebras. First, we recall their definition
following the approach taken in \cite{Wquant} and refined in further papers by the author,
see, e.g., \cite{W_prim}. Then we recall an important construction from \cite{HC}: functors between
the categories of Harish-Chandra bimodules. After that we recall the category $\mathcal{O}$
from \cite{BGK} and related constructions. Next, we recall the classification, \cite{LO}, of
finite dimensional irreducible modules with integral central characters over W-algebras.
Finally, we explain some constructions and results related to quantizations of
symplectic varieties, including nilpotent orbits.

In Section \ref{S_proofs} we prove our main results. An important auxiliary result is
Theorem \ref{Thm:equi} that gives a sufficient condition for a functor
$\bullet_{\dagger}$ from \cite{HC} between suitable categories of bimodules to be
an equivalence. We also derive some corollaries, see Section \ref{SS_conseq}.

{\bf Acknowledgements}. I would like to thank George Lusztig, Sasha Premet and David Vogan for stimulating discussions.
This work was partially supported by the NSF under grants DMS-1161584, DMS-1501558.

\section{Preliminaries on nilpotent orbits}
\subsection{Classification and special orbits}
First, let us recall the classification of nilpotent orbits in semisimple Lie algebras.
The nilpotent orbits in $\mathfrak{sl}_n$ are classified by the partitions of $n$.
We sometimes will write partitions as $m^{d_m}(m-1)^{d_{m-1}}\ldots 1^{d_1}$, where
superscripts indicate multiplicities.  The nilpotent orbits in $\mathfrak{so}_{2n+1}$
are classified by the
partitions of  $2n+1$ that have type $B$ meaning that every even part appears even number of times (to an
orbit we assign its Jordan type in the tautological representation of dimension $2n+1$).
The nilpotent orbits in $\mathfrak{sp}_{2n}$ are classified by partitions
of $2n$ that have type $C$ meaning that every odd part appears even number of times.
The nilpotent orbits of $\operatorname{O}_{2n}$ in $\mathfrak{so}_{2n}$  are classified by partitions of
$2n$ of type $D$ meaning that every even part appears even number of times.
The $\operatorname{SO}_{2n}$-action on the $\operatorname{O}_{2n}$-orbit corresponding
to a partition $\mu$ is transitive if and only if there is an odd part in $\mu$,
otherwise  the $\operatorname{O}_{2n}$-orbits splits into the union of
two $\operatorname{SO}_{2n}$-orbits. For a partition $\mu$, we will write
$\Orb_\mu$ for the corresponding orbit (for $\g=\mathfrak{so}_{2n}$
we consider orbits for $\operatorname{O}_{2n}$).

The classification in the exceptional types
is also known, there the orbits have labels consisting of a Dynkin diagram type
(e.g., $A_3+A_2$), sometimes with some additional decoration. The Dynkin diagram is
that of a minimal Levi subalgebra containing an element of a given orbit.

%

Below we will need some information about so called {\it special} orbits (as defined by Lusztig,
\cite[13.1.1]{orange}).
All orbits in type A are special. An orbit in type $B$ or $C$ corresponding to a partition
$\mu$ is special if $\mu^t$ has type $B$ or $C$, respectively. An orbit in type $D$ is special
if $\mu^t$ has type $C$. Special orbits in the exceptional algebras have been classified as well,
see \cite[Section 13.4]{Carter} or \cite[Section 8.4]{CM}.

There is an order reversing bijection ${\bf d}$ (called the Barbasch-Vogan-Spaltenstein duality,
a somewhat implicit construction was earlier discovered by Lusztig)
between the sets of special orbits in $\g$ and in the Langlands dual algebra $\,^L\g$.
For classical types, it is described combinatorially, see \cite[Section 6.3]{CM}, we will not
need this description. The description of this duality in exceptional types is provided
in \cite[Section 13.4]{Carter}.

\subsection{Structure of the boundary}
Below we will need some information about singularities of the closures $\overline{\Orb}$.
We start by studying the situation when $\operatorname{codim}_{\overline{\Orb}}\partial\Orb\geqslant 4$.

The following claim is Proposition 1.3.2 in \cite{Namikawa_induced}.

\begin{Lem}\label{Lem:codim4_class}
Suppose $\g$ is classical.
For $\Orb=\Orb_\mu$, the inequality $\operatorname{codim}_{\overline{\Orb}}\partial\Orb\geqslant 4$
is equivalent to $\mu_i-\mu_{i+1}\leqslant 1$ for all $i$.
\end{Lem}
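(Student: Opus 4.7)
The plan is to translate the statement into a combinatorial question about covers in the closure order on nilpotent orbits, and then to read off the relevant codimensions from the standard dimension formulas for nilpotent orbits in classical types.

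In classical types the closure order on nilpotent orbits coincides with the dominance order on partitions, restricted to the valid set (type $B$, $C$, or $D$). Hence
\[
\operatorname{codim}_{\overline{\Orb_\mu}}\partial\Orb_\mu \;=\; \min_\nu\bigl(\dim\Orb_\mu - \dim\Orb_\nu\bigr),
\]
where $\nu$ ranges over valid partitions covered by $\mu$ in the type-restricted dominance order. The lemma reduces to showing this minimum is $\geq 4$ iff $\mu_i - \mu_{i+1}\leq 1$ for all $i$ (with the convention $\mu_{i+1}=0$ past the last part, which forces the last part to equal $1$).

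In type $A$ the covers of $\mu$ in dominance order are exactly $\nu = \mu - e_i + e_j$ with $i<j$, $\mu_i > \mu_{i+1}$, $\mu_{j-1} > \mu_j$, and $\mu_{i+1} = \cdots = \mu_{j-1}$. Using $\dim\Orb_\mu = n^2 - \sum_k(\mu^t_k)^2$, a short calculation (with $\mu^t_{\mu_i}=i$ and $\mu^t_{\mu_j+1}=j-1$) gives $\dim\Orb_\mu - \dim\Orb_\nu = 2(j-i)$. A cover with $j=i+1$ exists iff some $\mu_i\geq\mu_{i+1}+2$, and gives codimension $2$; otherwise every cover has $j-i\geq 2$ and codimension $\geq 4$. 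This settles the type $A$ case.

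In types $B$, $C$, $D$, the dimension formula $\dim\Orb_\mu = \dim\g - \dim Z_\g(x)$ expresses $\dim Z_\g(x)$ as $\frac{1}{2}\sum_k(\mu^t_k)^2$ plus a parity correction counting the odd columns of $\mu^t$. Enumerate the covers in the type-restricted dominance order: they are single-box moves $\mu - e_i + e_j$ (when the result is a valid partition of the correct type) or multi-box moves when a single box would break the parity condition. Computing codimensions case by case, and using that orbits are symplectic (so codimensions are even), one verifies that $\operatorname{codim}<4$ again forces an adjacent single-box cover, hence some $\mu_i - \mu_{i+1}\geq 2$. The main obstacle is the bookkeeping of these subcases in types $B$, $C$, $D$: the parity conditions tie together several columns of $\mu^t$, one has to treat single-box and multi-box covers separately, and one has to verify the codimension bound in each flavor. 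A cleaner conceptual organization is provided by the Kraft-Procesi classification of transverse singularities along minimal degenerations of classical nilpotent orbit closures: codim-$2$ degenerations are Kleinian $A_k$ surface singularities and occur precisely when some $\mu_i - \mu_{i+1}\geq 2$, whereas the other minimal degenerations are symplectic singularities of codimension $\geq 4$.
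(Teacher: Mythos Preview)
The paper does not give its own proof of this lemma: it simply records that the claim is Proposition~1.3.2 of \cite{Namikawa_induced} and moves on. So there is nothing to compare your argument against except the cited reference.

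Your type~$A$ computation is clean and correct. For types $B$, $C$, $D$ you outline the strategy (enumerate covers in the type-restricted dominance order, compute codimensions from the centralizer-dimension formula with its parity correction) but do not actually carry out the case analysis, and instead close by invoking the Kraft--Procesi classification of minimal degenerations \cite{KP1,KP2}. That is a legitimate way to finish, and it is essentially how Namikawa proves his Proposition~1.3.2 as well: he reads the result off from Kraft--Procesi's tables of minimal degenerations, where the codimension-$2$ cases are exactly the $A_k$ surface singularities arising from a pair of rows with $\mu_i-\mu_{i+1}\geq 2$. So your proposal is correct, and once you appeal to Kraft--Procesi you are on the same path as the source the paper cites. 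If you want a self-contained argument you would have to actually execute the $B/C/D$ bookkeeping you allude to; otherwise the honest thing is to cite \cite{KP2} (or \cite{Namikawa_induced}) directly, as the paper does.
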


The following is the list of the orbits $\Orb$ in exceptional Lie algebras that satisfy
$\operatorname{codim}_{\overline{\Orb}}\partial\Orb\geqslant 4$. This information
can be extracted, for example, from \cite[Section 13]{FJLS}.

\begin{itemize}
\item[$G_2$:] $A_1$.
\item[$F_4$:] $A_1,\tilde{A}_1, A_1+\tilde{A}_1, A_2+\tilde{A}_1$.
\item[$E_6$:] $A_1,2A_1,3A_1, A_2+A_1, A_2+2A_1, 2A_2+A_1$.
\item[$E_7$:] $A_1,2A_1, (3A_1)', 4A_1, A_2+A_1, A_2+2A_1, 2A_2+A_1, A_4+A_1$.
\item[$E_8$:] $A_1,2A_1,3A_1,4A_1,A_2,A_2+A_1,A_2+2A_1, A_2+3A_1, 2A_2+A_1,
2A_2+2A_1,A_3+2A_1,D_4(a_1)+A_1, A_3+A_2+A_1, A_4+A_1, 2A_3, A_4+2A_1,
D_4(a_1)+A_1, A_4+A_3$.
\end{itemize}

Let us proceed to explaining results about the structure of the boundary of
the normalization $\operatorname{Spec}(\C[\Orb])$.

\begin{Lem}\label{Lem:smooth_trans}
The codimension of $\operatorname{Spec}(\C[\Orb])^{reg}\setminus \Orb$ in $\operatorname{Spec}(\C[\Orb])$
is $\geqslant 4$ for all but the following   cases:
\begin{itemize}
\item[$G_2$:] $\tilde{A}_1$,
\item[$F_4$:] $\tilde{A}_2+A_1, C_3(a_1)$,
\item[$E_6$:] $A_3+A_1$,
\item[$E_7$:] $(A_3+A_1)', D_6(a_2)$,
\item[$E_8$:] $A_3+A_1, A_5+A_1, D_5(a_1)+A_2, D_6(a_2), E_6(a_3)+A_1, E_7(a_2), E_7(a_5)$.
\end{itemize}
\end{Lem}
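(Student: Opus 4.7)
Write $X:=\operatorname{Spec}(\C[\Orb])$, the normalization of $\overline{\Orb}$, and let $\pi\colon X\to\overline{\Orb}$ be the normalization map. Then $X$ is a normal affine Poisson $G$-variety in which $\Orb$ is the unique open orbit, and $X\setminus\Orb$ decomposes into finitely many $G$-orbits of codimension $\geqslant 2$. Since $X^{reg}$ is $G$-stable, the inequality $\operatorname{codim}_X(X^{reg}\setminus\Orb)\geqslant 4$ is equivalent to saying that no codimension-$2$ $G$-orbit in $X\setminus\Orb$ is contained in $X^{reg}$. So the plan is to run through all codimension-$2$ orbits in $X\setminus\Orb$ and check whether the singularity of $X$ along each of them is smooth.

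For a codimension-$2$ orbit $\Orb'\subset X\setminus\Orb$ and $x\in\Orb'$, the formal transverse slice $S_x$ to $\Orb'$ in $X$ is a $2$-dimensional normal affine Poisson variety with a good contracting $\C^\times$-action (inherited from the Kazhdan action on a Slodowy-type slice). By the standard classification of such varieties, $S_x$ is either $\C^2$ or a Kleinian du Val singularity, and $\Orb'\subset X^{reg}$ precisely when $S_x\cong \C^2$. Under $\pi$, the orbit $\Orb'$ maps to a codimension-$2$ orbit $\Orb''\subset\partial\Orb$, and $S_x$ is a connected component of the normalization of the transverse slice to $\Orb''$ in $\overline{\Orb}$. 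Thus the question becomes: for each codimension-$2$ orbit $\Orb''\subset\partial\Orb$, does the normalization of the transverse slice to $\Orb''$ have a smooth component?

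In classical types this last question has the answer ``no'' in all cases. By Kraft--Procesi, the codimension-$2$ transverse slices in closures of classical nilpotent orbits are either Kleinian singularities of type $A$ or $D$, or one of a short list of non-normal slices whose normalizations split as disjoint unions of smaller Kleinians of type $A$ or $D$. No smooth component ever appears, so $\operatorname{codim}_X(X^{reg}\setminus\Orb)\geqslant 4$ holds for every classical $\Orb$; this is why no classical orbit figures in the statement. In exceptional types one invokes the tables of codimension-$2$ singularities of nilpotent orbit closures compiled in \cite{FJLS}, which list, for every orbit in each of $G_2,F_4,E_6,E_7,E_8$, the transverse slice singularity to each codimension-$2$ orbit in its boundary. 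The proof is then completed by going through these tables orbit by orbit and picking out those $\Orb$ such that some codimension-$2$ slice in $\partial\Orb$ acquires a smooth $\C^2$-component after normalization. The resulting list is exactly the one stated.

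The conceptual content is all in the first two paragraphs; the main obstacle is the finite but delicate case-by-case inspection of the exceptional tables. Concretely, the subtlety is that \cite{FJLS} encodes codimension-$2$ slices by symbols including several non-normal types, and for each such symbol one must recognize whether its normalization contains a smooth component or is a disjoint union of Kleinian singularities. Once this dictionary has been written down, the list of exceptions reads off mechanically.
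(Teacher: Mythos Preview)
Your proposal is correct and follows essentially the same route as the paper: reduce the question to whether the normalized transverse slice at each codimension-$2$ boundary orbit is smooth, dispose of classical types via Kraft--Procesi, and read off the exceptional cases from the FJLS tables. The paper's proof is terser but identical in substance; the only extra information it supplies is the concrete FJLS criterion, namely that the exceptions are exactly those $\Orb$ for which some downward edge from $\Orb$ in the graphs of \cite[Section~14]{FJLS} carries the label ``$m$''.
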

Here and below the superscript ``reg'' means the smooth locus.
\begin{proof}

The smooth locus in $\overline{\Orb}$ coincides with $\Orb$. So the inequality
$\operatorname{codim}\operatorname{Spec}(\C[\Orb])^{reg}\setminus \Orb\geqslant 4$
is equivalent to the claim that, for any  orbit $\Orb'$
of codimension $2$ in $\overline{\Orb}$, the normalization
of a formal slice to $\Orb'$ in $\Orb$ is not smooth. By results of \cite{KP1,KP2}, this is
always true when $\g$ is classical. Now we consider exceptional algebras. By results of \cite{FJLS},
our condition is equivalent to at least one edge going down from the label of $\Orb$ in the graphs
of Section 14 is marked with an ``m''. Examining these tables we get a required result.
\end{proof}

\subsection{Induction and birational induction}\label{SS_induct}
Let $\lf$ be a Levi subalgebra in $\g$ and $\Orb'\subset \lf$ be a nilpotent orbit.
Pick a parabolic subalgebra $\p\subset \g$ with Levi subalgebra $\lf$ and let
$\mathfrak{n}$ denote the nilpotent radical of $\mathfrak{p}$. Let $P\subset G$
denote the corresponding parabolic subgroup. Then the fiber bundle $G*_P(\overline{\Orb}'\times\n)$
naturally maps to $\Nilp$, this is known as the {\it generalized
Springer map}. Obviously, there is a unique dense orbit $\Orb$ in the image. This orbit
is called {\it induced from $\Orb'$} (see \cite{LS}), in fact, it is independent of the choice of $\p$. If the map $G*_P(\overline{\Orb}'\times\n)\rightarrow \overline{\Orb}$ is birational, then we say that $\Orb$
is {\it birationally induced} from $\Orb'$.
An orbit that cannot be (birationally) induced from an orbit in a proper Levi is called {\it (birationally) rigid}.

The induction for classical Lie algebras can be described combinatorially on the level of partitions.
We say that a partition $\mu$ of type $X$ (where $X$ is $B,C$ or $D$) is obtained from a
partition $\mu'$ {\it by an elementary step} if
\begin{itemize}
\item[(i)] either there is $n$ such that $\mu_i=\mu'_i+2$ for $i\leqslant n$,
and  $\mu_{i}=\mu_i'$ for $i>n$,
\item[(ii)] or there is $n$  such that  $\mu_i=\mu'_i+2$ for $i<n$,
$\mu_n=\mu'_n+1, \mu_{n+1}=\mu'_{n+1}+1, \mu_i=\mu_i'$ for $i>n+1$,
and the partition obtained from $\mu',n$ in (i) does not have a correct type.
\end{itemize}
The orbit $\Orb_\mu\subset \mathfrak{so}_{|\mu|}$ is induced from the orbit
$\Orb_{|\mu'|}\times \{0\}\subset \mathfrak{so}_{|\mu'|}\times \mathfrak{gl}_?\times\ldots
\mathfrak{gl}_?$ if and only $\mu$ is obtained from $\mu'$ by a sequence
of elementary steps (and the same is true for $\mathfrak{sp}$'s).

\begin{Prop}\label{Prop:bir_ind_rigid}
Any special orbit can be birationally induced from a birationally rigid special orbit.
\end{Prop}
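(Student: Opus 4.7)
The plan is to prove the statement by induction on $\dim\g$. The base case $\g=0$ is vacuous, and the case where $\Orb$ itself is birationally rigid is trivial (take the Levi to be $\g$ and $\Orb'=\Orb$). Assume then that $\Orb\subset \g$ is special but not birationally rigid. The goal of the induction step is to find a proper Levi subalgebra $\lf\subsetneq \g$ and a \emph{special} nilpotent orbit $\Orb_0\subset \lf$ such that $\Orb$ is birationally induced from $\Orb_0$. Granting this, the induction hypothesis applied to $\Orb_0\subset [\lf,\lf]$ produces a birationally rigid special orbit $\Orb'\subset \lf'\subset \lf$ with $\Orb_0$ birationally induced from $\Orb'$, and transitivity of birational induction (which follows from the construction of the generalized Springer map, since the composition of two birational morphisms remains birational) then yields that $\Orb$ is itself birationally induced from $\Orb'$.

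The heart of the argument is thus the existence of a special $\Orb_0$. For classical $\g$ I would argue combinatorially, using the partition description of induction recalled in Section \ref{SS_induct}. Recall that $\Orb_\mu$ of type $B$ or $C$ is special iff $\mu^t$ has the same type, while in type $D$ it is special iff $\mu^t$ has type $C$; so specialness is a condition on the conjugate partition. In view of Lemma \ref{Lem:codim4_class} and the known characterization of birational induction on the level of partitions, a nontrivial birational induction datum for $\Orb_\mu$ corresponds to an elementary step $\mu_0\to\mu$ of type (i) or (ii). I would run a case analysis of these two kinds of steps, read on the side of the transpose $\mu^t$, and for each special $\mu$ admitting such a step exhibit a $\mu_0$ for which $\mu_0^t$ again has the correct type.

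For exceptional $\g$, the claim reduces to a finite verification. The set of birationally rigid orbits can be extracted from the Fu--Namikawa description of $\mathbb{Q}$-factorial terminalizations (and has substantial overlap with the list in Lemma \ref{Lem:smooth_trans}), and Lusztig--Spaltenstein induction data for all remaining orbits is recorded in \cite{Carter}. For each non-birationally-rigid special orbit in $G_2,F_4,E_6,E_7,E_8$ I would inspect the tables and exhibit a birational induction datum in which the orbit in the Levi is special. I expect the principal obstacles to be precisely these finite verifications: correctly tracking the ``birational'' refinement of induction in the classical combinatorial step, and the case-by-case inspection in the exceptional types.
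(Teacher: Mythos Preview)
Your overall inductive strategy matches the paper's: reduce to showing that any special, non--birationally-rigid orbit is birationally induced from a \emph{special} orbit in a proper Levi, then appeal to transitivity of birational induction. For exceptional types your plan is essentially what the paper does (a case inspection organized via \cite{Fu} and the tables in \cite{Carter,CM}).

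There is, however, a real gap in the classical part. You assert that ``a nontrivial birational induction datum for $\Orb_\mu$ corresponds to an elementary step $\mu_0\to\mu$ of type (i) or (ii)''. This is the description of \emph{induction}, not of birational induction. In fact the crux of the paper's argument in classical types is precisely to establish that the induction is birational if and only if only steps of type (i) are involved: the ``if'' direction is cited from \cite[5.4.2--5.4.4]{W_dim}, and the ``only if'' direction is proved by exhibiting, for a type (ii) step, an explicit element $e\in(\Orb'\times\mathfrak{n})\cap\Orb$ whose $G$-centralizer is not contained in $P$ (using the transitivity of $P$ on that intersection from \cite{LS}). Once this characterization is in hand, the weakly rigid orbits of \cite{W_dim} coincide with the birationally rigid ones, and the combinatorics already done there---that a special partition can always be reduced by a type (i) step to a special partition---finishes the job. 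Your proposed case analysis ``of these two kinds of steps, read on the side of the transpose'' would not address the right question, since for type (ii) you would be producing ordinary induction data, not birational ones. The missing ingredient is the argument that type (ii) steps are never birational.
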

\begin{proof}
{\it Classical types}. We claim that the induction is birational if and only if
only elementary steps of type (i) are involved. The ``if'' part is established
in \cite[5.4.2-5.4.4]{W_dim}. To establish the ``only if'' part, we just need
to show that, for an elementary step of type (ii), the generalized Springer
map is not birational.

Let us consider the orthogonal case, the symplectic one is similar. Note that
in (ii), the number $\mu'_{n}$ and $\mu'_{n+1}$ coincide and are even.
The orbit $\Orb_\mu$ is induced from $\Orb_{\mu'}\times \{0\}\subset \mathfrak{so}_m\times \mathfrak{gl}_n$.
Recall that $P$ acts transitively on $(\Orb'\times \mathfrak{n})\cap \Orb$, see
\cite[Theorem 1.3]{LS}. So if
we know that $Z_G(e)\not\subset Z_P(e)$ for a particular choice of $e$ in
$(\Orb'\times \mathfrak{n})\cap \Orb$, then the corresponding induction
is not birational. Split $\C^m$ into the  sum $(U\oplus U^*)\oplus V'$, where $\dim U=\mu_n'$, the subspaces
$U\oplus U^*$ and $V'$ are orthogonal to one another, and $U$ is lagrangian
in $U\oplus U^*$. We can then split $\C^{m+2n}$ as $(\tilde{U}\oplus \tilde{U}^*)\oplus \tilde{V}$
with $\tilde{U}=U\oplus \C$. We have an element in $(\Orb'\times \mathfrak{n})\cap \Orb$
of the form $e_1+e_2$, where $e_1\in \mathfrak{so}(\tilde{V}')$ and
$e_2\in \gl(\tilde{U})\hookrightarrow \mathfrak{so}(U\oplus U^*)$
is a single Jordan block. Then the summand $\C\subset \tilde{U}$ is
the kernel of $e_2$. But note that the centralizer of $e_2\in \mathfrak{so}(U\oplus U^*)$
does not preserve the kernel of $e_2$ in $\tilde{U}$. This finishes
the argument for $\g$ of orthogonal type.

So all ``weakly rigid'' orbits  listed in \cite[5.4.2-5.4.4]{W_dim} are birationally rigid.
Since any special orbit can be birationally induced from one of those, our claim follows.

{\it Exceptional types}.  We proceed by the induction on $\operatorname{rk}\g$.
We need to show that if a special orbit is not birationally rigid, then it can be
birationally induced from a special orbit.

First, we consider the case when the group $A(\Orb)$ is trivial.
Here any induction is birational, so $\Orb$ is birationally rigid if
and only if it is rigid. By \cite[Theorem 8.3.1]{CM}, if the dual of $\Orb$ intersects
a Levi subalgebra, then $\Orb$ is properly induced from a special orbit.
So we only need to consider orbits whose duals are distinguished (and are not distinguished
themselves, those are obviously induced). The duality is described in \cite[Section 13.4]{Carter}.
Examining the data from there we arrive at the following list of
orbits whose duals are distinguished and that are not distinguished:
\begin{itemize}
\item[($G_2$)] None.
\item[($F_4$)] $\tilde{A}_1,\tilde{A}_1+A_1$.
\item[($E_6$)] $A_1,A_2$.
\item[($E_7$)] $A_1,2A_1,A_2,A_2+2A_1,D_4(a_1)$.
\item[($E_8$)] $A_1,2A_1,A_2, A_2+A_1,A_2+2A_1,2A_2,D_4(a_1),D_4(a_1)+A_1, D_4(a_1)+A_2$.
\end{itemize}
According to tables in \cite[Section 8.4]{CM},  the orbits $\tilde{A}_1$ in $F_4$,
$A_2$ in $E_6$, $A_2, D_4(a_1)$ in $E_7$, $A_2,A_2+A_1,2A_2, D_4(a_1), D_4(a_1)+A_1, D_4(a_1)+A_2$
in $E_8$ have nontrivial groups $A(\Orb)$. The remaining orbits in the list above are rigid,
see, e.g., \cite[Section 4]{dGE}.

Let us proceed to the case of nontrivial $A(\Orb)$. According to \cite[Section 3]{Fu} combined
with the lists of special orbits, see, e.g., \cite[Section 13.4]{Carter} or
\cite[Section 8.4]{CM}, the following special orbits have nontrivial $A(\Orb)$ and are not
birationally induced from a zero orbit.
\begin{itemize}
\item[($G_2$)] None.
\item[($F_4$)] None.
\item[($E_6$)] None.
\item[($E_7$)] $A_2+A_1, A_3+A_2, A_4+A_1, D_5(a_1)$.
\item[($E_8$)] $A_3+A_2, A_4+A_1, A_4+2A_1, D_5(a_1), E_6(a_1)+A_1, E_7(a_3), E_7(a_4)$.
\end{itemize}
The orbits $A_2+A_1, A_4+A_1$ in $E_7$, and $A_4+A_1, A_4+2A_1$ in $E_8$ are birationally
rigid by \cite[Proposition 3.1]{Fu}.The orbit $A_3+A_2$ in $E_7$ is birationally induced
from the orbit $(2^21^6,1^2)$ in $D_5+A_1$ that is special. The orbit $D_5(a_1)$
is birationally induced from the orbit $3^22^21^2$ in $D_6$ which is also special.
These computations were done in  \cite[Section 3.3]{Fu}.

Let us proceed to the remaining 5 orbits in $E_8$ following \cite[Section 3.4]{Fu}.
The orbit $A_3+A_2$ is birationally induced from the special orbit $2^21^{10}$ in $D_7$.
The orbit $D_5(a_1)$ is birationally induced from the special orbit $A_2+A_1$ in $E_7$.
The orbit $E_6(a_1)+A_1$ is birationally induced from the special orbit $A_4+A_1$ in $E_7$.
The orbit $E_7(a_3)$ is birationally induced from the   special orbit $3^22^21^2$ in $D_6$.
Finally, the orbit $E_7(a_4)$ is birationally induced from the special orbit $A_3+A_2$
in $E_7$.
\end{proof}

Note that the proof implies that an orbit $\Orb_\mu$ in a classical Lie algebra is
birationally rigid if and only if $\mu$ satisfies the combinatorial condition of
Lemma \ref{Lem:codim4_class}.
Note also that the orbits $A_2+A_1, A_4+A_1$ in $E_7$ and $A_4+A_1, A_4+2A_1$ in $E_8$ are only
birationally rigid but not rigid orbits in the exceptional Lie algebras, see \cite[Proposition 3.1]{Fu}.
This fact together with the classification of rigid orbits, see, e.g., \cite[Section 4]{dGE},
and Lemma \ref{Lem:smooth_trans} imply the following claim.

\begin{Lem}\label{Lem:smooth_trans_rigid}
The only birationally rigid orbits that fail the condition of
Lemma \ref{Lem:smooth_trans} are the six orbits from (e1).
\end{Lem}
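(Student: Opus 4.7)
The plan is to combine the information from Lemma \ref{Lem:smooth_trans} with the classification of rigid orbits via a case-by-case inspection, using the observations recorded in the two paragraphs preceding the statement.

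First, I reduce to the exceptional types. As noted in the proof of Lemma \ref{Lem:smooth_trans}, the Kraft--Procesi results \cite{KP1,KP2} imply that the codimension condition in Lemma \ref{Lem:smooth_trans} is automatic for every nilpotent orbit in a classical Lie algebra; the list of exceptions in that lemma contains only exceptional orbits. Therefore every birationally rigid orbit in a classical algebra satisfies the condition of Lemma \ref{Lem:smooth_trans}, and it suffices to analyze orbits in the exceptional Lie algebras.

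Second, I eliminate the ``birationally rigid but not rigid'' case. By the remark immediately preceding the statement, the only such orbits in exceptional Lie algebras are $A_2+A_1$ and $A_4+A_1$ in $E_7$, and $A_4+A_1$ and $A_4+2A_1$ in $E_8$. A direct inspection of Lemma \ref{Lem:smooth_trans} shows that none of these four orbits appears in its list of exceptions. Consequently, among the orbits appearing in that list, ``birationally rigid'' and ``rigid'' are equivalent.

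Third, I match the list of Lemma \ref{Lem:smooth_trans} against the classification of rigid nilpotent orbits in exceptional Lie algebras given in \cite[Section 4]{dGE}. Running through the entries: $\tilde{A}_1$ in $G_2$; $\tilde{A}_2+A_1$ in $F_4$; $(A_3+A_1)'$ in $E_7$; and $A_3+A_1$, $A_5+A_1$, $D_5(a_1)+A_2$ in $E_8$ are rigid, and these are exactly the six orbits listed in (e1). The remaining entries --- $C_3(a_1)$ in $F_4$, $A_3+A_1$ in $E_6$, $D_6(a_2)$ in $E_7$, and $D_6(a_2)$, $E_6(a_3)+A_1$, $E_7(a_2)$, $E_7(a_5)$ in $E_8$ --- do not appear in the list of rigid orbits in \cite{dGE}, so by the previous paragraph they are not birationally rigid.

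The main ``obstacle'' is not conceptual but purely bookkeeping: one must correctly cross-reference two classification tables. Since the reduction to exceptional types and to the rigid case have already been done in the material preceding the statement, the argument collapses to reading off the lists.
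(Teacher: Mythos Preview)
Your proposal is correct and follows exactly the argument the paper sketches in the paragraph immediately preceding the lemma: reduce to exceptional types via \cite{KP1,KP2}, use \cite[Proposition 3.1]{Fu} to identify the birationally-rigid-but-not-rigid orbits (none of which appear in the list of Lemma \ref{Lem:smooth_trans}), and then compare with the classification of rigid orbits in \cite[Section 4]{dGE}. There is no difference in approach; you have simply written out in full what the paper leaves as a one-sentence remark.
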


\subsection{$\mathbb{Q}$-factorial terminalizations}
Here we are going to recall the main result of \cite{Namikawa_induced} and \cite{Fu}
and explain some corollaries. This result (proved in \cite{Namikawa_induced}
for the classical types and in \cite{Fu} for the exceptional types) can be stated
as follows.

\begin{Prop}\label{Prop:qfacterm}
Let $\Orb$ be a birationally rigid orbit. Then $\operatorname{Spec}(\C[\Orb])$
has $\mathbb{Q}$-factorial terminal singularities.
\end{Prop}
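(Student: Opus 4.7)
The plan is to deduce the proposition directly from the classification of $\mathbb{Q}$-factorial terminalizations of nilpotent orbit closures, which is the main content of \cite{Namikawa_induced} for classical types and \cite{Fu} for exceptional types. My ``proof'' is really a corollary of their structural results, with almost no extra work needed once they are invoked correctly.

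First I would recall the conceptual setup. By Namikawa's general theory, $X:=\operatorname{Spec}(\C[\Orb])$ is a conical symplectic singularity, and so admits a (typically non-unique) projective $\mathbb{Q}$-factorial terminalization $\pi\colon\tilde{X}\to X$. The key structural theorem of Namikawa and Fu is that every such $\pi$ arises as a generalized Springer map: there is a Levi subalgebra $\lf\subset\g$, a parabolic $\p$ with Levi $\lf$, a nilpotent orbit $\Orb'\subset \lf$ with $\Orb$ birationally induced from $\Orb'$, and a $\mathbb{Q}$-factorial terminalization $\tilde{X}'$ of $\operatorname{Spec}(\C[\Orb'])$, such that $\tilde{X}=G*_P(\tilde{X}'\times\n)$ and $\pi$ is the induced map to $\overline{\Orb}\subset\g$ (composed with normalization).

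Granting this, the proposition is immediate. By definition, birational rigidity of $\Orb$ means the only admissible birational induction datum is the trivial one $(\lf,\Orb')=(\g,\Orb)$. Hence any $\mathbb{Q}$-factorial terminalization $\pi\colon \tilde{X}\to X$ must have $\tilde{X}=\tilde{X}'$ with $\pi$ the identity. But a variety is its own $\mathbb{Q}$-factorial terminalization if and only if it already has $\mathbb{Q}$-factorial terminal singularities, so $X=\operatorname{Spec}(\C[\Orb])$ has $\mathbb{Q}$-factorial terminal singularities, as claimed.

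The main obstacle, which is precisely the content of the cited papers and which I do not attempt to reprove, is the classification step: showing that every $\mathbb{Q}$-factorial terminalization comes from a birational induction. In the classical case this is carried out in \cite{Namikawa_induced} by an explicit analysis of the partition combinatorics together with the geometry of the codimension-two boundary strata; in the exceptional case \cite{Fu} proceeds case by case, using the tables of orbit degenerations and the computation of $H^2(\tilde{X},\mathbb{Q})$ to identify the relevant contractions. Once these are in hand, the deduction above is purely formal.
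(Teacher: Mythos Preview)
The paper itself gives no proof of this proposition: it simply states it as the main result of \cite{Namikawa_induced} (classical types) and \cite{Fu} (exceptional types). So your proposal should be compared to what those papers actually do.

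Your deduction, as written, has a gap. In your formulation of the structural theorem you allow $\tilde{X}'$ to be an arbitrary $\mathbb{Q}$-factorial terminalization of $\operatorname{Spec}(\C[\Orb'])$. When $\Orb$ is birationally rigid, the only datum is $(\lf,\Orb')=(\g,\Orb)$, $P=G$, $\n=0$, and then $\tilde{X}=G*_G(\tilde{X}'\times 0)=\tilde{X}'$ with $\pi\colon\tilde{X}'\to X$ the original terminalization map. This is tautological and does \emph{not} force $\pi$ to be the identity; for that you would need $\tilde{X}'=\operatorname{Spec}(\C[\Orb'])$ already, which is exactly the proposition you are trying to prove. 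If instead you strengthen the structural theorem to say that every terminalization has the form $G*_P(\operatorname{Spec}(\C[\Orb'])\times\n)$ with $\Orb'$ birationally rigid, then your argument works formally, but that version of the theorem already contains the proposition as its base case, so the reasoning is circular.

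In fact the logic in \cite{Namikawa_induced} and \cite{Fu} runs the other way around. The proposition is established \emph{first}, directly: for a symplectic singularity, terminality is equivalent to the singular locus having codimension at least~$4$ (Namikawa), and for birationally rigid orbits this codimension bound is verified combinatorially in classical types (cf.\ Lemma~\ref{Lem:codim4_class} here) and case-by-case in exceptional types; $\mathbb{Q}$-factoriality is checked separately. Only \emph{after} this base case is in hand do they obtain $\mathbb{Q}$-factorial terminalizations of arbitrary $\operatorname{Spec}(\C[\Orb])$ via the generalized Springer map from a birationally rigid orbit, which is the statement recorded immediately after Proposition~\ref{Prop:qfacterm} in the present paper.
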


Now let $\Orb$ be an arbitrary orbit. Suppose that $\Orb$ is birationally induced from
$\Orb'\subset \lf$. Then the  morphism $G*_P (\Spec(\C[\Orb'])\times \mathfrak{n})
\rightarrow \operatorname{Spec}(\C[\Orb])$ (obtained by lifting the generalized
Springer map to the normalizations) is a $\mathbb{Q}$-factorial terminalization.

Also recall the following classical result, see, e.g., \cite[Corollary 2.10]{Fu_review}.

\begin{Prop}\label{Prop:sympl}
For any $\Orb$, the variety $\operatorname{Spec}(\C[\Orb])$ has symplectic singularities
in the sense of Beauville.
\end{Prop}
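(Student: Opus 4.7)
The plan is to verify Beauville's two axioms for $X:=\Spec(\C[\Orb])$: (a) the Kirillov-Kostant form $\omega$ extends from $\Orb$ to a holomorphic symplectic form on the smooth locus $X^{reg}$; (b) for some resolution $\pi\colon \tilde X\to X$, the pullback $\pi^*\omega$ is a regular $2$-form on $\tilde X$.

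For (a), I would use that symplectic leaves of $\overline{\Orb}$ (the nilpotent orbits contained in it) have even codimension, so $\overline{\Orb}\setminus \Orb$ has codimension $\geqslant 2$ in $\overline{\Orb}$. Under the finite birational normalization $X\to \overline{\Orb}$ this bound is preserved, so $X\setminus \Orb$ has codimension $\geqslant 2$ in $X$, and in particular in $X^{reg}$. Since $\Omega^2_{X^{reg}}$ is locally free on the smooth variety $X^{reg}$, the form $\omega$ extends uniquely by Hartogs to all of $X^{reg}$; the extension is non-degenerate since otherwise the vanishing locus of the Pfaffian $\omega^n$ (where $2n=\dim X$) would be a divisor inside a codimension $\geqslant 2$ set, which is impossible.

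For (b), I would induct on $\dim\g$. When $\Orb$ is birationally rigid, Proposition \ref{Prop:qfacterm} gives that $X$ has $\mathbb{Q}$-factorial terminal singularities. The form $\omega^n$ trivializes $K_X$ over $X^{reg}$, and after passing to a suitable multiple $mK_X$ that is Cartier, this trivialization extends globally by normality. For any resolution $\pi\colon \tilde X\to X$, the terminal discrepancy formula $mK_{\tilde X}=\pi^*(mK_X)+\sum ma_iE_i$ with $a_i>0$ shows that $\pi^*\omega^{mn}$ is a regular section of $mK_{\tilde X}$; a pole of $\pi^*\omega$ of order $k>0$ along some exceptional divisor $E_i$ would produce a pole of order $mnk$ in $\pi^*\omega^{mn}$, a contradiction. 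Hence $\pi^*\omega$ is regular on $\tilde X$. When $\Orb$ is not birationally rigid, Proposition \ref{Prop:bir_ind_rigid} and the discussion following Proposition \ref{Prop:qfacterm} realize $\Orb$ as birationally induced from a birationally rigid orbit $\Orb'\subset \lf$, with partial terminalization $Y:=G*_P(\Spec(\C[\Orb'])\times \n)\to X$. By the inductive hypothesis, $\Spec(\C[\Orb'])$ has symplectic singularities; combining this with the canonical symplectic form on $T^*(G/P)$ promotes it to symplectic singularities on $Y$, and then any resolution of $Y$ composed with $Y\to X$ is a resolution of $X$ to which the argument for (b) applies.

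The main obstacle is to identify the Kirillov-Kostant form on $\Orb\subset X$ with the symplectic form reconstructed on $Y$ from $T^*(G/P)$ and $\Spec(\C[\Orb'])$, since only this identification lets the inductive conclusion on $Y$ be transferred back to $X$. This is essentially a moment-map calculation: $Y$ arises as the Hamiltonian reduction of $T^*G\times \Spec(\C[\Orb'])$ by the diagonal $P$-action, and the induced $G$-moment map on $Y$ coincides with the generalized Springer map, so the two forms match tautologically via Marsden-Weinstein reduction.
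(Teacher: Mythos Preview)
The paper does not prove this proposition at all: it is stated as a classical result with a pointer to \cite[Corollary 2.10]{Fu_review}. So there is no ``paper's proof'' to compare against; the question is whether your argument stands on its own.

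It does not, because it is circular. Your base case for (b) invokes Proposition~\ref{Prop:qfacterm} to conclude that $\Spec(\C[\Orb])$ has $\mathbb{Q}$-factorial terminal singularities when $\Orb$ is birationally rigid. But the proofs of that proposition in \cite{Namikawa_induced,Fu} take as their starting point precisely that $\Spec(\C[\Orb])$ has symplectic (hence canonical) singularities; only then can one run the MMP to produce a $\mathbb{Q}$-factorial terminalization and argue it is an isomorphism. So you are assuming the conclusion. A minor related point: your inductive step cites Proposition~\ref{Prop:bir_ind_rigid}, which only concerns \emph{special} orbits, whereas the statement is for arbitrary $\Orb$; the general fact that every orbit is birationally induced from a birationally rigid one is tautological, so the citation is misplaced rather than wrong.

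The standard non-circular route is the one behind the cited reference: part (a) is as you wrote, and for part (b) one uses that the normalization of a nilpotent orbit closure has rational Gorenstein singularities (Hinich, Panyushev), after which the extension of $\pi^*\omega$ across the exceptional locus is automatic by the Flenner/Namikawa criterion (a rational Gorenstein variety whose smooth locus carries a symplectic form has symplectic singularities). Alternatively, one can resolve $\overline{\Orb}$ by pulling back along a generalized Springer map from a parabolic with $\Orb'=0$ and use that the form on $T^*(G/P)$ is regular; this avoids the induction on $\dim\g$ entirely.
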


Let us deduce some corollaries from these results. The following result can be proved
along the lines of the proof of \cite[Lemma 12]{Namikawa_flop}.

\begin{Cor}\label{Cor:vanishing1}
Let $\Orb'\subset \lf$ be a birationally rigid orbit and let $\Orb\subset\g$ be birationally
induced from $\Orb'$. Let $X:=G*_P(\operatorname{Spec}(\C[\Orb'])\times \mathfrak{n})$.
Then $H^i(X^{reg}, \mathcal{O}_{X^{reg}})=0$ for $i=1,2$.
\end{Cor}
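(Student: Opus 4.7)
My plan is to reduce the statement to a vanishing for $H^i(X,\mathcal{O}_X)$ on the whole variety and then deduce the vanishing from rational-singularity considerations. Because $\Orb'$ is birationally rigid, Proposition~\ref{Prop:qfacterm} gives that $Z:=\operatorname{Spec}(\C[\Orb'])$ has $\mathbb{Q}$-factorial terminal symplectic singularities. For such a variety the singular locus has codimension at least $4$: symplectic leaves have even codimension, and a codimension-two leaf would be generically a du Val singularity, forcing a zero discrepancy on some crepant partial resolution and contradicting terminality. Since the bundle $X\to G/P$ is \'etale-locally trivial with fiber $Z\times \mathfrak{n}$, the singular locus $X^{sing}=G*_P(Z^{sing}\times \mathfrak{n})$ also has codimension at least $4$ in $X$; and because $Z$ has rational, in particular Cohen--Macaulay, singularities by Proposition~\ref{Prop:sympl}, the same is true of $X$. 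The standard local cohomology/depth argument therefore shows that restriction induces an isomorphism $H^i(X,\mathcal{O}_X)\xrightarrow{\sim} H^i(X^{reg},\mathcal{O}_{X^{reg}})$ for $i\leqslant 2$.

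It then remains to prove $H^i(X,\mathcal{O}_X)=0$ for $i=1,2$. For this I would use the generalized Springer map $f:X\to \operatorname{Spec}(\C[\Orb])$, obtained by lifting the map $G*_P(\overline{\Orb}'\times\mathfrak{n})\to \overline{\Orb}$ to the normalizations. This map is projective and, by the birational induction hypothesis, birational. Both source and target have rational singularities: $\operatorname{Spec}(\C[\Orb])$ by Proposition~\ref{Prop:sympl}, and $X$ by the local description above. Picking a resolution $g:\tilde X\to X$, both $g$ and $f\circ g$ resolve varieties with rational singularities, so $Rg_*\mathcal{O}_{\tilde X}=\mathcal{O}_X$ and $R(fg)_*\mathcal{O}_{\tilde X}=\mathcal{O}_{\operatorname{Spec}(\C[\Orb])}$. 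Composing yields $Rf_*\mathcal{O}_X=\mathcal{O}_{\operatorname{Spec}(\C[\Orb])}$, and since the target is affine we get $H^i(X,\mathcal{O}_X)=H^i(\operatorname{Spec}(\C[\Orb]),\mathcal{O})=0$ for $i>0$.

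The place where care is needed is in checking that each piece of input is genuinely available: that terminality (and not merely $\mathbb{Q}$-factoriality) of $Z$ is what forces the codimension-$4$ bound on $Z^{sing}$, that Cohen--Macaulayness and rational singularities of $X$ descend from the fiber $Z\times\mathfrak{n}$ through the \'etale-local triviality of $X\to G/P$, and that birational induction of $\Orb$ from $\Orb'$ is precisely what makes $f$ birational. Once those are in place the argument is essentially formal and follows the spirit of \cite[Lemma 12]{Namikawa_flop}. An alternative route via the Leray spectral sequence for $p:X\to G/P$, using $H^q(Z^{reg}\times\mathfrak{n},\mathcal{O})=H^q(Z^{reg},\mathcal{O})\otimes \C[\mathfrak{n}]$, leads to the same reduction and does not simplify matters.
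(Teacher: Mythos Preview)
Your proposal is correct and is precisely the argument the paper has in mind: the paper gives no independent proof but simply refers to \cite[Lemma 12]{Namikawa_flop}, and the reasoning there is exactly the two-step reduction you carry out (codimension-$4$ bound on the singular locus from terminality of the symplectic singularity, plus $Rf_*\mathcal{O}_X=\mathcal{O}$ from rational singularities of source and target). Your checklist of inputs at the end is accurate; the only cosmetic point is that for the pushforward argument you need $f$ to be proper, which follows since $G*_P\mathfrak{p}\hookrightarrow G/P\times\g$ is closed and $G/P$ is complete, without needing projectivity per se.
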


The next claim follows from Corollary \ref{Cor:vanishing1} and Lemma \ref{Lem:smooth_trans_rigid}.

\begin{Cor}\label{Cor:orb_vanish}
Let $\Orb$ be a birationally rigid orbit that is not one of the six orbits
listed in Lemma \ref{Lem:smooth_trans_rigid}. Then $H^i(\Orb,\mathcal{O}_{\Orb})=0$
for $i=1,2$.
\end{Cor}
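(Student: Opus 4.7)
The plan is to apply Corollary \ref{Cor:vanishing1} in the degenerate case $\lf=\g$, $\Orb'=\Orb$ (so $\p=\g$, $\n=0$, $P=G$), and then pass from $X^{reg}$ down to $\Orb$ via a codimension argument. With this choice we have $X=\operatorname{Spec}(\C[\Orb])$, and Corollary \ref{Cor:vanishing1} gives $H^i(X^{reg},\mathcal{O}_{X^{reg}})=0$ for $i=1,2$.

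Next I would invoke Lemma \ref{Lem:smooth_trans_rigid}: under our hypothesis, $\Orb$ is birationally rigid and not one of the six exceptional orbits listed there, so the conclusion of Lemma \ref{Lem:smooth_trans} applies, namely $\operatorname{codim}_{\operatorname{Spec}(\C[\Orb])}\bigl(\operatorname{Spec}(\C[\Orb])^{reg}\setminus\Orb\bigr)\geqslant 4$. In particular, viewing $Z:=X^{reg}\setminus\Orb$ as a closed subset of the smooth variety $Y:=X^{reg}$, we have $\operatorname{codim}_Y Z\geqslant 4$.

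The final ingredient is the standard local-cohomology fact that on a smooth variety $Y$, a closed subset $Z$ of codimension $\geqslant c$ satisfies $H^j_Z(\mathcal{O}_Y)=0$ for $j<c$; the long exact sequence relating $H^\bullet_Z$, $H^\bullet(Y,\mathcal{O}_Y)$ and $H^\bullet(Y\setminus Z,\mathcal{O}_{Y\setminus Z})$ then yields an isomorphism $H^i(Y,\mathcal{O}_Y)\xrightarrow{\sim} H^i(Y\setminus Z,\mathcal{O}_{Y\setminus Z})$ for $i\leqslant c-2$. With $c=4$ this gives an isomorphism in degrees $i=1,2$, so $H^i(\Orb,\mathcal{O}_\Orb)\cong H^i(X^{reg},\mathcal{O}_{X^{reg}})=0$ in those degrees, which is the required vanishing.

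There is no real obstacle; the only point requiring care is that the codimension estimate of Lemma \ref{Lem:smooth_trans} must be interpreted inside the smooth locus of $\operatorname{Spec}(\C[\Orb])$ (so that the local cohomology vanishing for $\mathcal{O}$ really applies), and this is automatic since $X^{reg}$ is open in $X$ and $\Orb$ is already contained in $X^{reg}$.
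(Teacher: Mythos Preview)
Your proof is correct and is exactly the argument the paper has in mind: the paper simply states that the claim follows from Corollary \ref{Cor:vanishing1} and Lemma \ref{Lem:smooth_trans_rigid}, and you have spelled out the connecting step (the local-cohomology codimension argument on the smooth locus) that makes this deduction work.
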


We will need one more result about birationally rigid orbits that is a corollary of
Proposition \ref{Prop:qfacterm} but can also be deduced from the classification.

\begin{Lem}\label{Lem:DR_cohom}
Let $\Orb$ be a birationally rigid orbit. Then $H^2_{DR}(\Orb)=\{0\}$.
\end{Lem}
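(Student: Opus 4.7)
My strategy is to use Proposition~\ref{Prop:qfacterm}. Set $X := \Spec(\C[\Orb])$. By that proposition, $X$ has $\mathbb{Q}$-factorial terminal symplectic singularities; in particular, the singular locus $X^{sing}$ has complex codimension at least $4$ in $X$. For the smooth stratum $X^{reg}$, Lemma~\ref{Lem:smooth_trans} together with Lemma~\ref{Lem:smooth_trans_rigid} shows that $X^{reg}\setminus\Orb$ has codimension $\geqslant 2$ in $X^{reg}$, and indeed $\geqslant 4$ when $\Orb$ is not among the six orbits of (e1).

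Since $\C[\Orb]$ is naturally non-negatively graded with one-dimensional degree-zero part, $X$ carries a $\mathbb{G}_m$-action whose unique attracting fixed point is the cone vertex; therefore $X$ is contractible and $H^i(X,\C)=0$ for all $i>0$. To propagate this vanishing to $\Orb$, I argue in two steps. First, because $X^{sing}$ has codimension $\geqslant 4$ in $X$ and $X$ has rational (symplectic) singularities by Proposition~\ref{Prop:sympl}, the open inclusion $X^{reg}\hookrightarrow X$ should induce an isomorphism $H^2(X,\C)\xrightarrow{\sim}H^2(X^{reg},\C)$, so $H^2(X^{reg},\C)=0$. Second, since $X^{reg}\setminus\Orb$ has codimension $\geqslant 2$ in the smooth variety $X^{reg}$, the restriction $H^2(X^{reg},\C)\to H^2(\Orb,\C)$ is an isomorphism by the usual purity (Gysin) sequence. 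Chaining the two yields $H^2_{DR}(\Orb)=0$.

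The main obstacle is the first isomorphism $H^2(X,\C)\xrightarrow{\sim}H^2(X^{reg},\C)$, which is a topological purity statement for the singular variety $X$: it cannot be deduced by pure codimension considerations without invoking the specific structure of $\mathbb{Q}$-factorial terminal symplectic singularities---both the codimension-$4$ bound from Proposition~\ref{Prop:qfacterm} and the rational singularity property are essential. An alternative route that bypasses this difficulty is to invoke the explicit classification of birationally rigid orbits extracted from the proof of Proposition~\ref{Prop:bir_ind_rigid}---the combinatorial criterion $\mu_i-\mu_{i+1}\leqslant 1$ in classical types $B,C,D$, and the finite explicit list in the exceptional types---and to verify $H^2_{DR}(\Orb)=0$ case by case, via the standard description of $H^2(\Orb,\C)$ in terms of the character lattice of the reductive centralizer $\tilde L^{\circ}$ modulo the natural action of the component group $A(\Orb)$.
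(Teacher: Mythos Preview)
Your main line of argument has a real gap at the step you correctly flag as the obstacle, and your proposed justification does not work. You suggest that $H^2(X,\C)\xrightarrow{\sim} H^2(X^{reg},\C)$ should follow from the codimension-$4$ bound together with rational singularities, but these two properties are not enough. A concrete counterexample: let $X$ be the affine cone over $Y=\mathbb{P}^1\times\mathbb{P}^1\times\mathbb{P}^1$ via $\mathcal{O}(1,1,1)$. Then $X$ has an isolated rational singularity of codimension $4$, yet $X^{reg}$ is a $\C^\times$-bundle over $Y$ and the Gysin sequence gives $H^2(X^{reg},\C)\cong\C^2$, while $X$ itself is contractible. What fails here is precisely $\mathbb{Q}$-factoriality (indeed $\operatorname{Cl}(X)\cong\mathbb{Z}^2$). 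So the relevant input from Proposition~\ref{Prop:qfacterm} is $\mathbb{Q}$-factoriality as such, not the codimension or rationality consequences you extract from it.

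The paper's proof uses $\mathbb{Q}$-factoriality directly and never passes through $H^2(X^{reg})$. Since $\Orb=G/Z_G(e)$, a standard argument identifies $H^2_{DR}(\Orb)$ with $(\q^*)^Q$, where $Q$ is the reductive part of $Z_G(e)$. Then $\mathbb{Q}$-factoriality of $\Spec(\C[\Orb])$ forces the character group $\Hom(Z_G(e),\C^\times)$ to be finite, whence $(\q^*)^Q=0$. If you repaired your step~2 by using $\mathbb{Q}$-factoriality to make $\operatorname{Cl}(X)$ torsion and then applied the exponential sequence on $X^{reg}$ together with Corollary~\ref{Cor:vanishing1}, you would arrive at the same endpoint; but the homogeneous-space description of $H^2_{DR}(\Orb)$ makes the detour through $X^{reg}$ unnecessary.

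Your classification fallback is valid, and the paper itself remarks that the lemma ``can also be deduced from the classification''; the argument via $(\q^*)^Q$ is the conceptual one.
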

\begin{proof}
Since $\operatorname{Spec}(\C[\Orb])$ is $\mathbb{Q}$-factorial, we conclude that
the group $\Hom(Z_G(e),\C^\times)$ is finite. So $(\q^*)^Q$
is zero. On the other hand, a standard argument shows that this space is
$H^2_{DR}(\Orb)$.
\end{proof}

\section{Preliminaries on W-algebras and quantizations}\label{S_W_alg}
\subsection{W-algebras}
Let $G$ be a reductive algebraic group, $\g$ its Lie algebra. Pick a nilpotent orbit
$\Orb\subset\g$. Choose an element $e\in \Orb$ and include it into an $\slf_2$-triple
$(e,h,f)$. We write $Q$ for the centralizer of $(e,h,f)$ in $G$.

From the triple $(e,h,f)$ we can produce a filtered associative algebra $\Walg$
equipped with a Hamiltonian $Q$-action.  Namely, consider the universal enveloping
algebra $\U=U(\g)$ with its standard PBW filtration $\U=\bigcup_{i\geqslant 0}\U_{\leqslant i}$.
It will be convenient for us to double the filtration and set $F_i \U:=
\U_{\leqslant [i/2]}$. Form the Rees algebra $\U_\hbar:=\bigoplus_{i}(F_i\U) \hbar^i$.
The quotient $\U_\hbar/(\hbar)$ coincides with $S(\g)=\C[\g^*]$. Identify $\g$
with $\g^*$ by means of the Killing form and let $\chi\in \g^*$ be the image of $\g$.
Consider the completion $\U_\hbar^{\wedge_\chi}$ by the topology induced by
the preimage of the maximal ideal of $\chi$. The space $V:=[\g,f]$ is symplectic
with the form given by $\langle\chi, [\cdot,\cdot]\rangle$. So we can form
the  homogenized Weyl algebra $\Weyl_\hbar$ of $V$, i.e., $\Weyl_\hbar$
is  the Rees algebra of the usual
Weyl algebra $\Weyl(V)$. We consider the  completion $\Weyl_\hbar^{\wedge_0}$ in the topology
induced by the maximal ideal of $0\in V$. Both $\U^{\wedge_\chi}_\hbar$ and
$\Weyl_\hbar^{\wedge_0}$ come equipped with an action of $Q\times \C^\times$.
The action of $Q$ on $\U_\hbar^{\wedge_\chi},\Weyl^{\wedge_0}_\hbar$ is
induced from the natural actions of $Q$ on $\g$ and $V$,
respectively. The group $\C^\times$ acts on $\g^*$ via $t.\alpha:=t^{-2}\gamma(t)\alpha$,
where $\gamma:\C^\times\rightarrow G$ is the one-parameter subgroup associated  to
$h$. It acts on $V$ by $t.v:= \gamma(t)^{-1}v$. Finally, we set $t.\hbar:=t\hbar$,
this defines $\C^\times$-actions on $\U_\hbar^{\wedge_\chi},\Weyl_\hbar^{\wedge_0}$
by topological algebra automorphisms that  commute with the $Q$-actions.

It was checked in \cite{Wquant}, see also \cite{HC,W_prim} that there is a $Q\times\C^\times$-equivariant
$\C[\hbar]$-linear embedding $\Weyl_\hbar^{\wedge_0}\hookrightarrow
\U_\hbar^{\wedge_\chi}$ such that we have the decomposition
\begin{equation}\label{eq:decomp}
\U_\hbar^{\wedge_\chi}\cong \Weyl_\hbar^{\wedge_0}\widehat{\otimes}_{\C[[\hbar]]}\Walg_\hbar',
\end{equation}
where we write $\Walg_\hbar'$ for the centralizer of $\Weyl_\hbar^{\wedge_0}$
in $\U_\hbar^{\wedge_\chi}$. This comes with an action of $Q\times \C^\times$.
Let us write $\Walg_\hbar$ for the $\C^\times$-finite part of $\Walg_\hbar'$,
then $\Walg'_\hbar$ is naturally identified with the completion $\Walg_\hbar^{\wedge_\chi}$.
Set $\Walg:=\Walg_\hbar/(\hbar-1)$. This is a filtered algebra with a Hamiltonian
$Q$-action that does not depend on the choice of the embedding $\Weyl_\hbar^{\wedge_0}
\hookrightarrow \U_\hbar^{\wedge_\chi}$ up to an isomorphism preserving the filtration
and the action. See \cite[Section 2.1]{W_prim}. The associated graded algebra $\gr\Walg$
coincides with $\C[S]$, where $S:=e+\ker\operatorname{ad}f$ is the Slodowy slice.

By a result of Ginzburg, which is a footnote in \cite[Section 5.7]{Premet2},
the natural map $Z(\U)\rightarrow Z(\Walg)$ (where $Z$ stands for the center)
is an isomorphism. So we can speak, for example, about $\Walg$-modules
with integral central character.

\subsection{Functors between Harish-Chandra bimodules}\label{SS_HC_fun}
By a $G$-equivariant Harish-Chandra $\U$-bimodule  (or $(\U,G)$-module) we mean a
finitely generated $\U$-bimodule $\B$ such that the adjoint $\g$-action is locally
finite and integrates to an action of $G$. Similarly, we can introduce the
notion of  a $Q$-equivariant HC $\Walg$-bimodule. We write $\HC^G(\U),\HC^{Q}(\Walg)$
for the categories of equivariant HC bimodules.

In \cite{HC}, we have constructed an exact functor $\bullet_{\dagger}:\HC^G(\U)
\rightarrow \HC^{Q}(\Walg)$. Let us recall the construction of the functor.
Pick a $G$-equivariant HC bimodule $\B$ and equip it with a good filtration
compatible with the filtration $\F_i\U$. So the Rees $\C[\hbar]$-module $\B_\hbar:=
R_\hbar(\B)$ is a $G$-equivariant $\U_\hbar$-bimodule.  Consider the completion
$\B_\hbar^{\wedge_\chi}$ in the $\chi$-adic topology. This is a $Q\times \C^\times$-equivariant
$\U_\hbar^{\wedge_\chi}$-bimodule (the action of $Q$ is Hamiltonian,
while the action of $\C^\times$ is not). As was checked in \cite[Proposition 3.3.1]{HC},
$\B_\hbar^{\wedge_\chi}=\Weyl_\hbar^{\wedge_0}\widehat{\otimes}_{\C[[\hbar]]} \underline{\B}_\hbar'$,
where $\underline{\B}'_\hbar$ is the centralizer of $\Weyl_\hbar^{\wedge_0}$.
So $\underline{\B}'_\hbar$ is a $Q\times \C^\times$-equivariant $\Walg_\hbar^{\wedge_\chi}$-bimodule.
One can show that it coincides with the completion of its  $\C^\times$-finite
part $\underline{\B}_\hbar$. We set $\B_{\dagger}:=\underline{\B}_\hbar/(\hbar-1)$.
This is an object in $\HC^Q(\Walg)$ that comes equipped with a good filtration.
This filtration depends on the choice of a filtration on $\B$, while $\B_{\dagger}$
itself does not.

Let us list properties of the functor $\bullet_{\dagger}$ established in \cite[Sections 3.3,3.4]{HC}.

\begin{Lem}\label{Lem:dag_prop}
The following is true:
\begin{enumerate}
\item $\U_\dagger=\Walg$.
\item $\bullet_{\dagger}$ is an exact functor.
\item $\bullet_{\dagger}$ intertwines the tensor products.
\item $\gr\B_{\dagger}$ (with respect to the filtration above)
coincides with the pull-back of $\gr\B$ to $S$.
\item In particular, $\bullet_{\dagger}$ maps the category $\HC_{\overline{\Orb}}^G(\U)$
of all HC bimodules supported on $\overline{\Orb}$ to the category $\HC^Q_{fin}(\Walg)$
of all finite dimensional $Q$-equivariant $\Walg$-bimodules. Further, $\bullet_{\dagger}$
annihilates $\HC^G_{\partial\Orb}(\U)$.
\item The induced functor $\HC^G_{\Orb}(\U):=\HC_{\overline{\Orb}}^G(\U)/\HC_{\partial\Orb}^G(\U)
\rightarrow \HC^Q_{fin}(\Walg)$ is a full embedding whose image is closed under taking subquotients.
\item The functor $\bullet_{\dagger}$ respects central characters on the left and on the right.
\item For $\B\in \HC_{\overline{\Orb}}(\U)$, the dimension of $\B_{\dagger}$ coincides
with the multiplicity of $\B$ on $\Orb$.
\end{enumerate}
\end{Lem}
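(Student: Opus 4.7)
\medskip

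The plan is to verify each of the eight items in turn by tracing the three steps of the construction: Rees completion at $\chi$, extraction of the centralizer of $\Weyl_\hbar^{\wedge_0}$, and passage to the $\C^\times$-finite part followed by specialization at $\hbar=1$. Most items are essentially formal once one knows that each step is well-behaved; the genuinely nontrivial claim is (6), which will require the bulk of the work.

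Items (1)--(3) and (7) I would dispatch quickly. For (1), applying the recipe to $\B=\U$ recovers exactly the definition of $\Walg$. For (2), completion at $\chi$ is exact on finitely generated $\U_\hbar$-bimodules, and both the operation of extracting the $\Weyl_\hbar^{\wedge_0}$-centralizer (which by (\ref{eq:decomp}) amounts to an honest tensor factor decomposition) and taking $\C^\times$-finite vectors are exact; specialization at $\hbar=1$ is also exact. For (3), the completion commutes with balanced tensor products of HC bimodules (after equipping the tensor product with its natural filtration), the decomposition (\ref{eq:decomp}) is compatible with tensor products on the $\Walg_\hbar$-factor, and the $\C^\times$-finite part respects this. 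For (7), central characters are read off the image of $Z(\U)$, which sits inside the $\Weyl_\hbar^{\wedge_0}$-centralizer and is mapped by the recipe to its image in $Z(\Walg)$; by Ginzburg's isomorphism $Z(\U)\xrightarrow{\sim} Z(\Walg)$ recalled above, left and right central characters are preserved.

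Items (4), (5), and (8) I would treat together, all reducing to the associated graded computation. Reducing $\underline{\B}_\hbar$ modulo $\hbar$ and comparing with the decomposition $\gr\B_\hbar^{\wedge_\chi}\cong \C[V]^{\wedge_0}\widehat{\otimes}\C[S]^{\wedge_0}\otimes \gr\B$ shows $\gr\B_\dagger$ is the scheme-theoretic pullback of $\gr\B$ to the Slodowy slice $S$, giving (4). For (5), if $\B$ is supported on $\overline{\Orb}$, the pullback is set-theoretically supported on $S\cap\overline{\Orb}=\{e\}$, hence finite dimensional; if $\B$ is supported on $\partial\Orb$, the pullback is $0$ since $S\cap\partial\Orb=\emptyset$. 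For (8), the multiplicity of $\B$ on $\Orb$ is the generic rank of $\gr\B$ along $\Orb$, and by transversality of $S$ to $\Orb$ this equals $\dim \gr\B_\dagger=\dim\B_\dagger$.

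The one substantive step is (6): fullness of the induced functor $\HC^G_\Orb(\U)\to\HC^Q_{fin}(\Walg)$ and closure of its image under subquotients. The approach is to construct a one-sided adjoint (a restoration functor $\bullet^\dagger$) using the same decomposition in reverse: starting from a finite dimensional $Q$-equivariant $\Walg$-bimodule $\mathcal{N}$, form $\Weyl_\hbar^{\wedge_0}\widehat{\otimes}\mathcal{N}_\hbar^{\wedge_\chi}$, view it as a $\U_\hbar^{\wedge_\chi}$-bimodule via the decomposition, take the maximal $G$-locally finite subbimodule, and pass to $\hbar=1$. Verifying that $\bullet^\dagger\circ\bullet_\dagger\to\mathrm{id}$ becomes an isomorphism modulo $\HC^G_{\partial\Orb}(\U)$, and that $\bullet_\dagger\circ\bullet^\dagger=\mathrm{id}$, yields both the full faithfulness and the closure of the image under subquotients (any subbimodule of $\B_\dagger$ in $\Walg\text{-bimod}^Q$ pulls back to a genuine sub-HC-bimodule of $\B$ modulo boundary). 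The main obstacle, and the place where one genuinely uses the results of \cite{HC}, is showing that the restoration functor lands in $\HC^G(\U)$ (finite generation and integrability of the $G$-action), and that the adjunction unit and counit have the stated properties; once these are in hand, (6) follows.
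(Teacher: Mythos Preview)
The paper does not actually prove this lemma; it is stated as a summary of results established in \cite[Sections~3.3,~3.4]{HC}, and the construction of the right adjoint $\bullet^{\dagger}$ is then recalled immediately afterwards. Your outline for items (1)--(5), (7), and (8) is correct and tracks the arguments from \cite{HC} as the paper summarizes them.

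There are, however, two genuine problems in your treatment of (6). First, your description of $\bullet^{\dagger}$ omits a necessary step: after taking the maximal $G$-locally finite subbimodule $\mathcal{F}_G(\B'_\hbar)$, one must still pass to $A$-invariants, where $A=Q/Q^\circ$ acts by the difference between the given $Q$-action and the restriction of the $G$-action (see the paragraph following Lemma~\ref{Lem:fin_part_fin_gen}). Without this, the output need not even be a $G$-equivariant bimodule in the required sense.

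Second, and more seriously, your assertion that $\bullet_{\dagger}\circ\bullet^{\dagger}=\mathrm{id}$ on all of $\HC^Q_{fin}(\Walg)$ is too strong: that statement is exactly the essential surjectivity in Theorem~\ref{Thm:equi}, which the paper proves only under the additional hypothesis $\operatorname{codim}_{\overline{\Orb}}\partial\Orb\geqslant 4$, and whose proof occupies Lemma~\ref{Lem:fin_gen}. Item (6) asserts only that $\bullet_{\dagger}$ is fully faithful on the quotient category and that its image is closed under subquotients, not that it is an equivalence. The actual argument for (6) in \cite{HC} shows that the adjunction unit $\B\to(\B_{\dagger})^{\dagger}$ has kernel and cokernel supported on $\partial\Orb$ (giving full faithfulness on the quotient), and handles closure under subquotients by lifting a sub-$\Walg$-bimodule of $\B_{\dagger}$ to a sub-HC-bimodule of $\B$ through the completion, not by appealing to a counit isomorphism that does not exist in general.
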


The functor $\bullet_{\dagger}: \HC^G_{\overline{\Orb}}(\U)\rightarrow
\HC^Q_{fin}(\Walg)$ has a right adjoint $\bullet^{\dagger}:
\HC^Q_{fin}(\Walg)\rightarrow \HC_{\overline{\Orb}}(\U)$. We will need
the construction of the functor $\bullet^{\dagger}$ below so let us recall it.

Pick $\underline{\B}\in \HC^Q_{fin}(\Walg)$ and equip it with a
$Q$-stable filtration (we can just take the trivial one).
Then form the Rees bimodule $\underline{\B}_\hbar$
and the $Q$-equivariant $\U_\hbar^{\wedge_\chi}$-bimodule
$\B'_\hbar=\Weyl_\hbar^{\wedge_0}\widehat{\otimes}_{\C[[\hbar]]}
\underline{\B}^{\wedge_\chi}_\hbar$. Now set
$$\mathcal{F}_G(\B'_\hbar):=\bigoplus_V V\otimes \operatorname{Hom}_\g(V, \B'_\hbar),$$
where we view $\B'_\hbar$ as a $\g$-module with respect to the adjoint action,
$\xi.b:=\frac{1}{\hbar^2}[\xi,b]$ and the sum is taken over all $G$-modules $V$.
In other words, $\mathcal{F}_G(\B'_\hbar)$ is the maximal subspace in $\B'_\hbar$,
where the adjoint action of $\g$ is locally finite and integrates to an action of
$G$. The space $\mathcal{F}_G(\B'_\hbar)$ is a $G$-equivariant bimodule
over the algebra $\U^{\diamondsuit}_\hbar$ that is the $\g$-finite part of $\U_\hbar^{\wedge_\hbar}$.
Note that the subspace $\mathcal{F}_G(\B'_\hbar)$ is $Q\times \C^\times$-stable in $\B'_\hbar$.

\begin{Lem}\label{Lem:fin_part_fin_gen}
$\mathcal{F}_G(\B'_\hbar)$ is finitely generated as a left $\U^{\diamondsuit}_\hbar$-bimodule.
\end{Lem}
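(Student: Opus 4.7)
The strategy is to reduce to a statement modulo $\hbar$ by an $\hbar$-adic Nakayama argument, and then to identify the classical limit geometrically as global sections of an equivariant vector bundle on the orbit. First, $\mathcal{F}_G(\B'_\hbar)$ is a closed $\C[[\hbar]]$-submodule of $\B'_\hbar$ and so inherits $\hbar$-adic completeness and separation from the latter. By a standard graded Nakayama argument, it therefore suffices to show that the quotient $\mathcal{F}_G(\B'_\hbar)/\hbar \mathcal{F}_G(\B'_\hbar)$ is finitely generated as a bimodule over $\U_\hbar^\diamondsuit/\hbar$, which is a $G$-finite subalgebra of $\C[\g^*]^{\wedge_\chi}$.

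Second, I would identify this classical quotient geometrically. Since $\mathcal{F}_G$ is exact on the relevant category of $\hbar$-flat $\C^\times$-equivariant modules (it is a direct sum of $\Hom_\g(V,-)$), it commutes with reduction modulo $\hbar$. Using (\ref{eq:decomp}) and the fact that $\gr \underline{\B}$ is a finite-dimensional $\C[S]$-bimodule supported scheme-theoretically at $\chi$, the quotient $\B'_\hbar/\hbar \B'_\hbar$ identifies with the formal completion at $\chi\in \g^*$ of a $G$-equivariant coherent sheaf $\mathcal{F}$ on $\g^*$, supported in $\overline{\Orb}$, with stalk at $\chi$ equal to the finite-dimensional $Q$-module $M := \gr\underline{\B}$. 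A $\g$-finite vector in this formal completion spans a finite-dimensional $G$-submodule whose image at $\chi$ is a $Q$-equivariant linear map into $M$; by $G$-equivariance this extends uniquely to a global section of the vector bundle $G\times_Q M$ over $\Orb = G\cdot\chi$, and conversely any such section is $\g$-finite near $\chi$. Hence $\mathcal{F}_G(\B'_\hbar/\hbar\B'_\hbar)\cong \Gamma(\Orb, G\times_Q M)$.

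Third, I would verify that $\Gamma(\Orb, G\times_Q M)$ is finitely generated over $\C[\g^*]$ as a bimodule. Both actions factor through the coordinate ring of the normalization $\operatorname{Spec}(\C[\Orb])$, a finite extension of $\C[\overline{\Orb}]$. Because $\operatorname{Spec}(\C[\Orb])$ is normal and $\operatorname{codim}(\operatorname{Spec}(\C[\Orb])\setminus\Orb)\geqslant 2$ (boundary nilpotent orbits have codimension at least two in the closure), a Hartogs-type argument extends $G\times_Q M$ to a coherent $G$-equivariant sheaf on $\operatorname{Spec}(\C[\Orb])$, whose global sections are finitely generated by Noetherianness of $\C[\Orb]$ and hence also over $\C[\g^*]$. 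Combining this with the Nakayama step completes the argument.

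The principal technical obstacle is Step 2, namely the equivariant identification of $\g$-finite germs at $\chi$ with global sections of $G\times_Q M$ over the entire orbit. This is essentially the rigidity statement that Taylor expansions of $G$-finite vectors on a homogeneous space reconstruct the whole section; implementing it rigorously requires handling the $\C^\times$-equivariance and the fact that $\B'_\hbar/\hbar\B'_\hbar$ is presented only as a formal slice completion rather than a global coherent sheaf a priori.
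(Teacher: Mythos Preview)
Your overall strategy matches the paper's: reduce modulo $\hbar$, show the classical limit is finitely generated over $\C[\g^*]$ (the paper cites \cite[Section 3.3]{HC} for this, and your Steps 2--3 essentially reconstruct that argument), then lift by an $\hbar$-adic argument. Two points need correction.

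First, and more seriously, your Step 1 asserts that $\mathcal{F}_G(\B'_\hbar)$ is a \emph{closed} $\C[[\hbar]]$-submodule of $\B'_\hbar$ and hence $\hbar$-adically complete. This is false in general: the $G$-finite part is a direct sum of isotypic components and is typically dense but not closed in the $\hbar$-adic topology (think of $\C[x]\subset\C[[x]]$ under a torus scaling). Consequently a blanket Nakayama lemma does not apply. The paper's proof avoids this by working one isotypic component at a time: it fixes lifts $b_1,\ldots,b_k$ of generators of $\mathcal{F}_G(\B'_\hbar)/\hbar\mathcal{F}_G(\B'_\hbar)$ lying in isotypic components $V_1,\ldots,V_k$, and then for any $b$ in a fixed $V$-isotypic component constructs approximations $b\equiv\sum a_i^k b_i \pmod{\hbar^k}$ with each $a_i^k$ constrained to the finitely many isotypic components of $\U_\hbar^{\diamondsuit}$ occurring in $V\otimes V_i^*$. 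These finite sums of isotypic components \emph{are} closed (each $\Hom_\g(V,\B'_\hbar)$ is a kernel of a continuous map between complete modules), so the sequences $(a_i^k)$ converge. That bookkeeping is the actual content of the lifting step and is missing from your sketch.

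Second, in Step 2 you claim $\mathcal{F}_G$ is exact and hence commutes with reduction modulo $\hbar$. The functor $\bigoplus_V V\otimes\Hom_\g(V,-)$ is only left exact; what one gets is the inclusion $\mathcal{F}_G(\B'_\hbar)/\hbar\,\mathcal{F}_G(\B'_\hbar)\hookrightarrow \mathcal{F}_G(\B'_\hbar/\hbar\B'_\hbar)$, which is exactly what the paper uses. This is harmless for the argument (a submodule of a finitely generated module over the Noetherian ring $\C[\g^*]$ is again finitely generated), but the stated reason is wrong. Your geometric identification of $\mathcal{F}_G(B)$ with sections of $G\times_Q M$ and the Hartogs/normality argument for finite generation are correct and are precisely the content of \cite[Section 3.2--3.3]{HC} that the paper invokes.
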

\begin{proof}
We have $\mathcal{F}_G(\B'_\hbar)/\hbar \mathcal{F}_G(\B'_\hbar)\hookrightarrow
\mathcal{F}_G(\B'_\hbar/\hbar \B'_\hbar)$. The latter is a finitely generated
$\C[\g^*]=\U^{\diamondsuit}_\hbar/(\hbar)$-module
by results of \cite[Section 3.3]{HC}. So $\mathcal{F}_G(\B'_\hbar)/\hbar \mathcal{F}_G(\B'_\hbar)$
is a finitely generated $\C[\g^*]$-module as well. Pick $b$ lying in
the $V$-isotypic component of $\B'_\hbar$. The module $\B'_\hbar/\hbar \B'_\hbar$
is generated by finitely many elements $\underline{b}_1,\ldots,\underline{b}_k$
lying, say, in the components $V_1,\ldots,V_k$. Then we can argue by induction on $k$ to
show that, modulo $\hbar^k$, the element $b$ is represented as $\sum_{i=1}^k a^k_i b_i$,
where $b_i$ is a lift of $\underline{b}_i$ to the $V_i$-isotypic component
of $\B'_\hbar$, and $a^k_i$ lies in the sum of the isotypic components of $\U^{\diamondsuit}_\hbar$
corresponding to summands of $V\otimes V_i^*$. Furthermore, we can achieve that
the sequence of $a_i^k$ has limit for $k\rightarrow \infty$. This proves that the elements $b_1,\ldots, b_k$.
\end{proof}

On $\mathcal{F}_G(\mathcal{B}'_\hbar)$ we also have an action of $Q$ restricted from $G$.
The two actions coincide on $Q^\circ$ and their difference is an action of $A:=Q/Q^\circ$
commuting with the $G$-action, see \cite[Section 3.3]{HC}. Let $\B_\hbar$ denote the $\C^\times$-finite part in
$\mathcal{F}_G(\B_\hbar')^{A}$. Then we set $\underline{\B}^{\dagger}:=\B_\hbar/(\hbar-1)$. Again,
$\underline{\B}\mapsto \underline{\B}^{\dagger}$ is a functor and one can show that
it is right adjoint to $\bullet_{\dagger}$. Moreover, the composition of
$\bullet^{\dagger}$ with the forgetful functor $\HC^G_{\overline{\Orb}}(\U)
\twoheadrightarrow \HC^G_{\Orb}(\U)$ is the left inverse of $\bullet_{\dagger}:
\HC_{\Orb}(\U)\rightarrow \HC^Q_{fin}(\Walg)$.

\subsection{Categories $\mathcal{O}$}\label{SS_Cat_O}
One can define the categories $\mathcal{O}$ for $\Walg$, see \cite{BGK}.
Namely, we have the Lie algebra homomorphism $\q\rightarrow \Walg$
that can easily be shown to be an embedding, see \cite[Section 2.1]{W_dim}.
Pick a regular integral element $\theta\in \q$, the integrality refers to a choice of a maximal
torus. We define the category $\OCat^\theta_{\Walg}$ to be the full
subcategory in the category of the finitely generated $\Walg$-modules
consisting of all modules $M$ such that  the real parts of eigenvalues
are bounded from above and all generalized eigenspaces of $\theta$
are finite dimensional. This definition is easily seen to be equivalent
to that in \cite[Section 4.4]{BGK}.

In $\OCat^\theta_{\Walg}$ we have analogs of Verma modules. To define
those we need some notation. The element $\theta$ defines the
grading on $\Walg$, we have $\Walg=\bigoplus_i \Walg_i$, where
$\Walg_i$ is the eigenspace of $[\theta,\cdot]$ with eigenvalue $i$.
We set $\Walg_{\geqslant 0}:=\bigoplus_{i\geqslant 0}\Walg_i$
and $\Walg_{>0}:=\bigoplus_{i>0}\Walg_i$. Note that $\Walg_{>0}$
acts locally nilpotently on a module from  $\mathcal{O}^\theta_{\Walg}$.
The algebra $\Walg^0:=\Walg_{\geqslant 0}/(\Walg_{\geqslant 0}
\cap \Walg\Walg_{>0})$ acts on the annihilator $M^{\Walg_{>0}}$.
One can show that $M^{\Walg_{>0}}$ is a finite dimensional
$\Walg^0$-module, this is an easy consequence of
\cite[Corollary 4.12]{BGK}. We have the Verma module functor
$\Delta^\theta:\Walg^0\operatorname{-mod}_{fg}\rightarrow \OCat^\theta_\Walg$
given by $\Delta^\theta(N):=\Walg/\Walg\Walg_{>0}\otimes_{\Walg^0}N$, it
is right adjoint to $M\mapsto M^{\Walg_{>0}}$.

There is a bijection between the irreducible $\Walg^0$-modules
and the irreducible objects in $\OCat^\theta_{\Walg}$: we send
an irreducible $\Walg^0$-module $N$ to the maximal proper quotient
$L^\theta(N)$.

One can describe the algebra $\Walg^0$, see \cite{BGK}. Namely,
consider the W-algebra $\underline{\Walg}$ for the pair
$(\g_0,e)$, where $\g_0$ is the centralizer of $\theta$ in $\g$.
Then there is a $T$-equivariant filtered algebra isomorphism
$\iota:\Walg^0\xrightarrow{\sim}\underline{\Walg}$, see \cite[Theorem 4.3]{BGK}.
An important feature of this isomorphism is its behavior
on $\tf$. Namely, we have $\iota(\xi)=\xi+\langle\delta,\xi\rangle$,
where $\delta$ is  half the character of the action of
$\tf$ on $V_{>0}$, the sum of the $\theta$-eigenspaces in $V$
with positive eigenvalues.

Below we will only need to know $\delta$ for birationally rigid
orbits up to adding a character of a maximal torus  $T\subset Q$. 

\begin{Lem}\label{Lem:delta_inegrality}
Assume that $G$ is simply connected and $\Orb$ is birationally rigid.
Suppose that $\g_0$ is a standard Levi subalgebra and that the Lie algebra
$\tf$ of $T$ is the center of $\g_0$.
Then up to adding a character of $T$, $\delta$ coincides with
half the sum of the positive roots $\alpha$
satisfying $\langle\alpha,h\rangle =1$.
\end{Lem}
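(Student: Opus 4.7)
The plan is to rewrite both $\delta$ and $\tfrac12\sum_{\alpha>0,\,\langle\alpha,h\rangle=1}\alpha|_\tf$ using the decomposition of $\g$ into $\mathfrak{sl}_2(e,h,f)\oplus Q$-isotypic components,
\[
\g=\bigoplus_{d\geqslant 0}M_d\otimes L_d,
\]
where $M_d$ is the $(d+1)$-dimensional irreducible $\mathfrak{sl}_2$-module and $L_d$ is an algebraic $Q$-module. Since $G$ is simply connected, $Q$ is a closed reductive subgroup of $G$, so each $L_d$ is an honest $T$-representation whose character lies in $\mathbb{X}^*(T)$. Because $\theta\in\q$ commutes with $(e,h,f)$, the entire triple lies in $\g_0=\g^\theta$, and $\theta$ acts by $0$ on each $M_d$ and by the $T$-weights on $L_d$.

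First I compute $\operatorname{char}_\tf(V_{>0})$. Since $V=[\g,f]=\operatorname{ad}(f)\g$ and $\operatorname{ad}(f)$ only acts on the $\mathfrak{sl}_2$-factor, $V=\bigoplus_d f(M_d)\otimes L_d$ with $\dim f(M_d)=d$. Hence
\[
V_{>0}=\bigoplus_d f(M_d)\otimes L_d^{>0},\qquad \operatorname{char}_\tf(V_{>0})=\sum_{d\geqslant 0}d\cdot\operatorname{char}_\tf(L_d^{>0}).
\]

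Second I restrict the half-sum of positive roots with $\langle\alpha,h\rangle=1$ to $\tf$. Standardness of $\g_0$ lets one pick a Borel $B\subset\g$ containing $\p_0=\g_0\oplus\g_{>0}$; its positive roots split into the positive roots of $\g_0$ (for the induced Borel) together with the roots of $\g_{>0}$. The former vanish on the center $\tf$ of $\g_0$, so
\[
\sum_{\alpha>0,\,\langle\alpha,h\rangle=1}\alpha\Big|_\tf=\operatorname{char}_\tf\bigl(\g_{>0}\cap\g(1)\bigr).
\]
In the $\mathfrak{sl}_2\oplus Q$-decomposition, $\g(1)=\bigoplus_d(M_d)_1\otimes L_d$, and $(M_d)_1$ has dimension $1$ when $d$ is odd and $0$ otherwise, whence $\g_{>0}\cap\g(1)=\bigoplus_{d\text{ odd}}L_d^{>0}$ and its $\tf$-character equals $\sum_{d\text{ odd}}\operatorname{char}_\tf(L_d^{>0})$.

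Subtracting,
\[
\operatorname{char}_\tf(V_{>0})-\operatorname{char}_\tf(\g_{>0}\cap\g(1))=\sum_{d\text{ even}}d\cdot\operatorname{char}_\tf(L_d^{>0})+\sum_{d\text{ odd}}(d-1)\cdot\operatorname{char}_\tf(L_d^{>0}),
\]
and every coefficient is even, so the right-hand side lies in $2\mathbb{X}^*(T)$. Dividing by two yields $\delta-\tfrac12\sum_{\alpha>0,\,\langle\alpha,h\rangle=1}\alpha|_\tf\in\mathbb{X}^*(T)$, which is the claim. The main technical point is the choice of Borel: standardness of $\g_0$ pins down $B$ only up to a Borel of $\g_0$, but that ambiguity only shifts the $\g_0$-contribution of the half-sum, which restricts to zero on $\tf$ and thus leaves the conclusion unchanged.
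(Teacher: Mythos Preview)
Your argument is correct and takes a different, more elementary route than the paper's. Both proofs ultimately rest on the $\slf_2$-module structure of $\g$, but the paper uses it less directly: it writes $\g_{>0}=V_{>0}\oplus\mathfrak{z}_\g(e)_{>0}$ and $\mathfrak{z}_\g(e)\cong\g(0)\oplus\g(1)$ as $T$-modules, and then argues \emph{separately} that $\operatorname{char}_T(\g_{>0})\in 2\mathbb{X}^*(T)$ (using that $G$ is simply connected, so $\rho|_\tf\in\mathbb{X}^*(T)$) and that $\operatorname{char}_T(\g(0)_{>0})\in 2\mathbb{X}^*(T)$ (using that $(G(0),G(0))$ is simply connected together with the inclusion $T\subset(G(0),G(0))$, the latter coming from $(\q^*)^Q=0$, i.e.\ from birational rigidity). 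Your isotypic computation bypasses all of this: since $\dim f(M_d)-\dim(M_d)_1=d-[d\text{ odd}]$ is always even, the difference of characters lies in $2\mathbb{X}^*(T)$ for free. So neither birational rigidity nor simply-connectedness is actually needed for the conclusion, and you have proved a statement strictly stronger than the lemma. Two small remarks: ``a Borel $B\subset\g$ containing $\p_0$'' should read \emph{contained in} $\p_0$; and your closing paragraph on the Borel ambiguity is exactly the reduction the paper states as ``we may assume $\theta$ is dominant.''
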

\begin{proof}
We may assume that $\theta$ is dominant. Note that $V_{>0}\oplus \mathfrak{z}_{\g}(e)_{>0}=\g_{>0}$.
Since $G$ is simply connected, the character of the $T$-action on $\Lambda^{top}\g_{>0}$
is divisible by $2$. Note that $\mathfrak{z}_\g(e)\cong \g(0)\oplus \g(1)$ as a $T$-module,
where we write $\g(i)$ for the eigenspace for $[h,\cdot]$ with eigenvalue $i$.
To establish the claim of the lemma it is enough to check that the character of
$T$ on $\Lambda^{top}\g(0)$ is divisible by $2$. For this, we observe that
$(G(0),G(0))$ is simply connected because $G$ is simply connected. Also
as we have seen in the proof of Lemma \ref{Lem:DR_cohom}, the group $Q$
has no invariants in $\q^*$ or, equivalently, in $\q$. Since $Q\subset G(0)$,
we conclude that $Q^\circ\subset (G(0),G(0))$. Since $(G(0),G(0))$ is simply
connected, the character of a maximal torus of $(G(0),G(0))$ on
$\Lambda^{top}\g(0)$ is indeed divisible by $2$.
\end{proof}

We will need to compute $\delta$ (up to adding an integral weight)
in two cases.

\begin{Ex}\label{Ex:E7}
Consider the orbit $A_2+A_1$ in $E_7$.  We use the notation for (simple)
roots in $E_7$ from the table section of \cite{VO}. For a minimal Levi containing
$e$ we take the standard Levi with simple roots $\alpha_1=\epsilon_1-\epsilon_2,\alpha_2=\epsilon_2-\epsilon_3,
\alpha_6=\epsilon_6-\epsilon_7$. So we can take $h=2\epsilon_1-2\epsilon_3+\epsilon_6-\epsilon_7$.
The positive roots of $E_7$ are of the form $\epsilon_i-\epsilon_j$, where $i<j<8$ or $i=8$
and $\epsilon_i+\epsilon_j+\epsilon_k+\epsilon_8$, where $i,j,k$ are pairwise different numbers
less than $8$. There are twelve roots whose pairing with $h$ equals $1$:

$\epsilon_1-\epsilon_6, \epsilon_8-\epsilon_7, \epsilon_i-\epsilon_7$,
$\epsilon_1+\epsilon_i+\epsilon_7+\epsilon_8$, where $i=2,4,5$, $\epsilon_1+\epsilon_3+\epsilon_6+\epsilon_8,
\epsilon_1+\epsilon_3+\epsilon_6+\epsilon_8, \epsilon_i+\epsilon_j+\epsilon_6+\epsilon_8$, where
$i,j\in \{2,4,5\}$.

The sum of these roots equals $\kappa:=5\epsilon_1+4\epsilon_2+\epsilon_3+4\epsilon_4+4\epsilon_5
+3\epsilon_6-\epsilon_7+8\epsilon_8$.

The intersection of $\tf$ with the coroot lattice has basis
$\epsilon_1+\epsilon_2+\epsilon_3-3\epsilon_8, 3\epsilon_8-\epsilon_5-\epsilon_6-\epsilon_7,
2\epsilon_8-\epsilon_6-\epsilon_7, 4\epsilon_8$. The values of these elements
on $\kappa$ are equal $-14,18, 14, 16$. We deduce that $\frac{1}{2}\kappa$
is integral on $\tf$ (where the lattice is the intersection of $\tf$ with the
coroot lattice).
\end{Ex}

\begin{Ex}\label{Ex:E8}
Now consider the orbit $A_4+2A_1$ in $E_8$. We again use the notation from
\cite{VO}. For a minimal Levi containing $e$, we take the standard Levi subalgebra
with simple roots $\alpha_1=\epsilon_1-\epsilon_2, \alpha_2=\epsilon_2-\epsilon_3,
\alpha_3=\epsilon_3-\epsilon_4, \alpha_4=\epsilon_4-\alpha_5, \alpha_7=\epsilon_7-\epsilon_8,
\alpha_8=\epsilon_6+\epsilon_7+\epsilon_8$. So we can take $h=4\epsilon_1+2\epsilon_2-2\epsilon_4-
4\epsilon_5+\epsilon_6+2\epsilon_7$. The positive roots for $E_8$ are of the form $\epsilon_i-\epsilon_j,
1\leqslant i<j\leqslant 9$, $\epsilon_i+\epsilon_j+\epsilon_k$, where $i,j,k\in \{1,2,\ldots,8\}$
are pairwise distinct, $-\epsilon_i-\epsilon_j-\epsilon_9$, where $1\leqslant i<j<9$.
There are 14 positive roots that pair by $1$
with $h$:

$\epsilon_6-\epsilon_8, \epsilon_6-\epsilon_9, \epsilon_2-\epsilon_6, \epsilon_1+\epsilon_2+\epsilon_5,
\epsilon_1+\epsilon_5+\epsilon_7, \epsilon_1+\epsilon_3+\epsilon_4, \epsilon_1+\epsilon_4+\epsilon_8,
\epsilon_2+\epsilon_3+\epsilon_8, \epsilon_2+\epsilon_4+\epsilon_7, \epsilon_3+\epsilon_7+\epsilon_8,
-\epsilon_3-\epsilon_8-\epsilon_9, -\epsilon_1-\epsilon_5-\epsilon_9, -\epsilon_2-\epsilon_4-\epsilon_9,
-\epsilon_4-\epsilon_7-\epsilon_9$.

The sum of these roots equals $\kappa:=2\epsilon_1+2\epsilon_2+\epsilon_3+\epsilon_7-6\epsilon_9$.
A basis in the intersection of $\tf$ with the coroot lattice is given by
the fundamental weights $\pi_5:=\sum_{i=1}^5 \epsilon_i-5\epsilon_9, \pi_6:=\sum_{i=1}^6
\epsilon_i-3\epsilon_9$. Their pairings with $\kappa$ are $35, 23$. So we see that
$\delta$ is not integral.
\end{Ex}

\subsection{Classification of finite dimensional irreducible representations}\label{SS_W_classif}
Here we will review results from \cite{LO} that concern the classification of finite dimensional
irreducible $W$-algebra modules with integral central character.

Namely, recall that a primitive ideal $\J$ with associated variety $\overline{\Orb}$
exists if and only if $\Orb$ is special. Recall that the set of special nilpotent
orbits is in one-to-one correspondence with two-sided cells in $W$. To a two-sided
cell ${\bf c}$ Lusztig assigned the quotient $\bar{A}$ of the component group $A=Q/Q^\circ$,
see \cite[p. 343]{orange}.
One description of $\bar{A}$ is as follows. Consider the cell $W$-module $[{\bf c}]$
and the Springer $W\times A$-module $\operatorname{Spr}_{\Orb}$. Then $\bar{A}$
coincides with the quotient of $A$ by the kernel of the $A$-action on
$\Hom_{W}([{\bf c}], \operatorname{Spr}_{\Orb})$.

Now consider the primitive ideals with regular integral central character and associated variety
$\Orb$. This set is in bijection with the set of left cells in ${\bf c}$. When
the character is not regular, the set of primitive ideals embeds into the set of
left cells. To a left cell $\sigma\subset {\bf c}$ one can assign a subgroup
$H_\sigma\subset \bar{A}$ that was first introduced in \cite{Lusztig_leading}.
It can be defined as follows, see \cite[Section 6]{LO}. Consider the $A$-module
$\Hom_W([\sigma], \operatorname{Spr}_{\Orb})$, where $[\sigma]$ denotes
the cell module. Then $H_\sigma$ is a unique (up to conjugacy) subgroup
in $\bar{A}$ such that the $\bar{A}$-module $\Hom_W([\sigma], \operatorname{Spr}_{\Orb})$
is induced from the trivial $H_\sigma$-module.

Now take a primitive ideal $\J$ with associated variety $\overline{\Orb}$.
The image $\J_{\dagger}$ is a maximal $Q$-stable ideal of finite codimension
in $\Walg$. The simple finite dimensional modules annihilated by
$\J_{\dagger}$ therefore form an $A$-orbit. It was checked in
\cite{Wquant} that every finite dimensional irreducible $\Walg$-module $N$
is annihilated by  $\J_{\dagger}$ for some $\J$ as above that is forced to have
the same central character as $N$. The main result of \cite{LO} is that
the $A$-orbit over a primitive ideal $\J$ with integral central character
coincides with $\bar{A}/H_\sigma$, where $\sigma$ is the left cell corresponding
to $\J$.

This implies the following corollary.

\begin{Cor}
Let $\Orb$ be a special orbit. Then the following is equivalent.
\begin{itemize}
\item[(a)] $\Walg$ has no $A$-stable finite dimensional irreducible
module with integral central character.
\item[(b)] $\Orb$ is one of the orbits in (e2) of Section \ref{SS_Main_res}.
\end{itemize}
\end{Cor}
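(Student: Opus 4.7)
The plan is to translate condition (a) into a purely combinatorial condition on the two-sided cell ${\bf c}$ attached to $\Orb$ via the Springer correspondence, and then verify that condition by inspection of Lusztig's tabulation of cells in exceptional Weyl groups.

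By the results recalled just above, every finite dimensional irreducible $\Walg$-module $N$ with integral central character has annihilator of the form $\J_{\dagger}$ for a unique primitive ideal $\J\subset U(\g)$ with associated variety $\overline{\Orb}$ whose central character agrees with that of $N$, and the isomorphism classes of such $N$ (for fixed $\J$) form the transitive $\bar{A}$-set $\bar{A}/H_\sigma$, where $\sigma$ is the left cell corresponding to $\J$. Since the $A$-action factors through $\bar{A}$ and a transitive $\bar{A}$-set admits an $\bar{A}$-fixed point if and only if it is a one-element set, an $A$-stable simple exists in the fiber over $\J$ precisely when $H_\sigma = \bar{A}$. So (a) will fail for $\Orb$ if and only if there exist a primitive ideal $\J$ with integral central character and associated variety $\overline{\Orb}$ and corresponding left cell $\sigma$ with $H_\sigma=\bar{A}$.

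Choosing the central character to be regular integral, every left cell $\sigma\subset{\bf c}$ arises as the cell of some such primitive ideal (Duflo). Hence (a) is equivalent to the purely combinatorial statement that $H_\sigma\neq\bar{A}$ for every left cell $\sigma\subset{\bf c}$. This condition is vacuously violated whenever $\bar{A}$ is trivial, so the only candidate orbits are the special orbits in exceptional Lie algebras having nontrivial $\bar{A}$; these are finite in number and all tabulated.

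The remaining step is thus the case-by-case verification: for every special orbit outside (e2), exhibit a left cell $\sigma\subset{\bf c}$ satisfying $H_\sigma=\bar{A}$, while for each of the three orbits in (e2) show that every left cell gives a proper subgroup. This uses the explicit families of irreducible $W$-characters, their distribution into left cells, and the $\bar{A}$-module structure of $\Hom_W([\sigma],\operatorname{Spr}_{\Orb})$ from which $H_\sigma$ is read off (see \cite{Lusztig_leading} and \cite[Ch.~4]{orange}). Typically the left cell containing the special Lusztig representation attached to $\Orb$ produces $H_\sigma=\bar{A}$; the orbits in (e2) are precisely the exceptions. The main (and only) obstacle is this mechanical tabular verification in the exceptional Weyl groups; no additional conceptual input beyond the classification results of \cite{LO} and Lusztig's cell theory is required.
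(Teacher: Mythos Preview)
Your reduction via \cite{LO} is correct and is exactly what the paper intends: the paper gives no explicit argument beyond ``This implies the following corollary,'' and your translation of (a) into the combinatorial condition ``$H_\sigma\neq\bar{A}$ for every left cell $\sigma\subset{\bf c}$'' is the right unpacking of that implication.

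There is, however, a genuine gap in your case reduction. From ``$\bar{A}$ trivial $\Rightarrow$ (a) fails'' you jump to ``only exceptional Lie algebras need checking.'' This does not follow: special orbits in types $B$, $C$, $D$ frequently have nontrivial Lusztig quotient $\bar{A}$ (an elementary abelian $2$-group, described combinatorially via symbols). For each such classical orbit you still owe the existence of a left cell $\sigma$ with $H_\sigma=\bar{A}$. This is true, and can be extracted either from the combinatorics of symbols and Lusztig's description of left cells in classical types, or from the results of \cite{W_dim} (which in effect establish, for every special orbit in classical type, the existence of an $A$-stable one-dimensional $\Walg$-module with integral central character). But you must say one of these things; as written, the classical case is simply missing.

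Once that is repaired, your outline is complete and coincides with the paper's intended (if unwritten) argument: the only content beyond \cite{LO} is the tabular verification that a left cell with $H_\sigma=\bar{A}$ exists for every special orbit except the three in (e2).
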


\subsection{Quantizations of symplectic varieties}
Here $X$ is a smooth algebraic symplectic variety with form $\omega$. We assume that
$X$ comes equipped with a $\C^\times$-action such that $t.\omega=t\omega$. We are going
to define the notion of a filtered quantization of $X$. By the conical topology on $X$,
we mean a topology, where ``open'' means Zariski open and $\C^\times$-stable.
In particular, the structure sheaf $\mathcal{O}_X$ can be viewed as a sheaf of graded
algebras in the conical topology. By a filtered quantization $\mathcal{D}$ of $X$
we mean a sheaf of filtered algebras together with an isomorphism $\gr\mathcal{D}
\xrightarrow{\sim}\mathcal{O}_X$ of sheaves of graded Poisson algebras such that
the filtration on $\mathcal{D}$ is complete and separated.

Now suppose that a reductive group $G$ acts on $X$ in a Hamiltonian way commuting with
the $\C^\times$-action rescaling the form. Suppose that the $G$-action lifts to
a filtration preserving action on $\mathcal{D}$. We say that the lifted action is Hamiltonian
if there is a $G$-equivariant homomorphism $\Phi:\g\rightarrow \Gamma(\mathcal{D})$ such that
$[\Phi(\xi),\cdot]$ coincides with the derivation induced by the $G$-action, for any $\xi\in \g$.
The map $\Phi$ automatically exists provided $H^1_{DR}(X)=0$.

Consider the case when $X$ is a nilpotent orbit. Here $H^1_{DR}(X)=0$.

\begin{Lem}\label{Lem:quant_nilp}
Let $G$ be a semisimple group and $\Orb$ its nilpotent orbit.
There is a natural bijection between the following two sets.
\begin{enumerate}
\item[(a)] The set of quantizations  of $\Orb$
with Hamiltonian $G$-action.
\item[(b)] The set of primitive ideals $\J\subset \U$ with associated
variety  $\overline{\Orb}$ and the multiplicity of $\U/\J$ on $\Orb $ is $1$.
\end{enumerate}
\end{Lem}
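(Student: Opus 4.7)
The plan is to construct mutually inverse maps between the two sets. For the assignment from (a) to (b), given a Hamiltonian quantization $\mathcal{D}$, I take $A:=\Gamma(\Orb,\mathcal{D})$ with its quantum comoment map $\Phi:U(\g)\to A$ and set $\J:=\ker\Phi$. Because the filtration on $\mathcal{D}$ is complete and separated, the canonical map $\gr A\hookrightarrow\Gamma(\Orb,\gr\mathcal{D})=\C[\Orb]$ is injective. The classical limit of $\Phi$ is the standard restriction $\C[\g^*]\twoheadrightarrow\C[\overline{\Orb}]\hookrightarrow\C[\Orb]$ induced by the inclusion $\Orb\hookrightarrow\g\cong\g^*$, so $\gr\J$ is precisely the defining ideal of $\overline{\Orb}$ and $\gr(U/\J)=\C[\overline{\Orb}]$. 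This gives $\operatorname{AV}(\J)=\overline{\Orb}$ and multiplicity $1$ on $\Orb$, because the normalization map $\C[\overline{\Orb}]\hookrightarrow\C[\Orb]$ is generically an isomorphism.

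Primeness of $\J$ follows from the irreducibility of $\overline{\Orb}$. To upgrade primeness to primitivity, I apply the functor $\bullet_{\dagger}$ of Section \ref{SS_HC_fun}. By Lemma \ref{Lem:dag_prop}(8), $(U/\J)_{\dagger}$ has dimension equal to the multiplicity of $U/\J$ on $\Orb$, which is $1$; it is therefore a one-dimensional $Q$-equivariant $\Walg$-bimodule $\C_\chi$ attached to a character $\chi$. Under the correspondence recalled in Section \ref{SS_W_classif}, the simple $\Walg$-module $\C_\chi$ determines a primitive ideal in $U(\g)$, and by the central-character compatibility and full faithfulness in Lemma \ref{Lem:dag_prop}(6)--(7) this primitive ideal must coincide with $\J$.

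For the assignment from (b) to (a), I start with $\J$ and form the Rees algebra $R_\hbar(U/\J)$, which satisfies $R_\hbar(U/\J)/(\hbar)=\C[\overline{\Orb}]$. Completing $\hbar$-adically, pulling back along the finite birational map $\Spec\C[\Orb]\to\overline{\Orb}$ so as to reach the conical open subset $\Orb\subset\Spec\C[\Orb]$, and extracting the $\C^\times$-finite part, I obtain a sheaf of filtered algebras $\mathcal{D}$ on $\Orb$ with $\gr\mathcal{D}\cong\mathcal{O}_\Orb$; the $G$-action and comoment map are inherited from $U/\J$. Mutual inverseness is then straightforward: the composition $(b)\to(a)\to(b)$ returns $\J$ because the comoment map into $\Gamma(\mathcal{D})$ factors through $U/\J$ by construction, and the composition $(a)\to(b)\to(a)$ returns $\mathcal{D}$ because microlocalization is inverse to taking global sections on the smooth locus of a normal variety.

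The most delicate step will be primitivity in the first assignment: in $U(\g)$, prime does not generally imply primitive, and I lean essentially on the classification of finite-dimensional $\Walg$-modules via the functor $\bullet_{\dagger}$. A secondary technical difficulty is the microlocalization in the converse direction when $\overline{\Orb}$ fails to be normal; one must verify that the $\hbar$-adic formal quantization of $\C[\overline{\Orb}]$ lifts compatibly to the normalization $\Spec\C[\Orb]$ and matches the conical Zariski topology used to define filtered quantizations in Section 3.5.
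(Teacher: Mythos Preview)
Your overall strategy matches the paper's: send $\mathcal{D}\mapsto\ker\Phi$ and $\J\mapsto$ the microlocalization of $\U/\J$ to $\Orb$. The gap is in your treatment of primeness and primitivity. The claim that $\gr\J$ is \emph{precisely} the defining ideal of $\overline{\Orb}$ does not follow: one only has $\gr(\ker\Phi)\subset\ker(\gr\Phi)=I(\overline{\Orb})$, so you get $\overline{\Orb}\subset\operatorname{AV}(\J)$ but not the equality $\gr(\U/\J)=\C[\overline{\Orb}]$. Consequently ``primeness follows from irreducibility of $\overline{\Orb}$'' is unjustified (an ideal with irreducible associated variety need not be prime), and your W-algebra detour does not cleanly close this: full faithfulness of $\bullet_\dagger$ holds only on the \emph{quotient} category $\HC^G_\Orb(\U)$, so an isomorphism $(\U/\J)_\dagger\cong(\U/\J')_\dagger$ yields at best an isomorphism modulo $\HC^G_{\partial\Orb}(\U)$, not $\J=\J'$ in $\U$. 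The paper sidesteps all of this with a one-line observation you overlooked: since $\gr\Gamma(\mathcal{D})\hookrightarrow\C[\Orb]$ and $\C[\Orb]$ is a domain, $\Gamma(\mathcal{D})$ has no zero divisors; hence $\U/\J\hookrightarrow\Gamma(\mathcal{D})$ forces $\J$ to be completely prime, and therefore primitive. The associated variety and multiplicity are then read off from the sandwich $\C[\overline{\Orb}]=\operatorname{im}\mu^*\subset\gr(\U/\J)\subset\gr\Gamma(\mathcal{D})\subset\C[\Orb]$ (filtration on $\U/\J$ induced from $\Gamma(\mathcal{D})$), not from an unproved equality $\gr(\U/\J)=\C[\overline{\Orb}]$.

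A smaller gap: for the round trip $(b)\to(a)\to(b)$, the factorization of the comoment map through $\U/\J$ gives only $\J\subset\J_{\mathcal{D}_\J}$, not equality. The paper closes this by noting that both ideals are prime with the same associated variety and invoking \cite[Corollar 3.6]{BK}.
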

A more general statement (that deals with coverings of nilpotent orbits)
can be found in \cite[Theorem 15]{Moeglin}, \cite{quant_nilp} (note that the
set in (b) is in one-to-one correspondence with the set of 1-dimensional
$A$-stable $\Walg$-modules). We provide an independent  proof in our case.
\begin{proof}
Let $\mathcal{D}$ be a quantization. The map $\Phi$ extends to an algebra
homomorphism $\U\rightarrow \Gamma(\mathcal{D})$. We have $\gr\Gamma(\mathcal{D})
\hookrightarrow \C[\Orb]$. Moreover, the composition of $\gr\Phi:\g\rightarrow
\gr\Gamma(\mathcal{D})$ with this inclusion coincides with the comoment map
$\mu^*:\g\rightarrow \C[\Orb]$. So the composition of $\gr\Phi: S(\g)\rightarrow \gr\Gamma(\mathcal{D})$
with the inclusion $\gr\Gamma(\mathcal{D})\hookrightarrow \C[\Orb]$ coincides
with the map $\mu^*:S(\g)\rightarrow \C[\Orb]$.
Set $\J_{\mathcal{D}}:=\ker\Phi$. We have $\gr\J_{\mathcal{D}}\subset \ker\mu^*$,
the latter coincides with the ideal of $\overline{\Orb}$.
In particular, $\overline{\Orb}$ is contained in the associated variety of $\J_{\mathcal{D}}$. On the other hand,
we see that
\begin{equation}\label{eq:inclusions}\operatorname{im}\mu^*\subset \gr(\U/\mathcal{J}_D)
\subset \gr\Gamma(\mathcal{D})\subset \C[\Orb],\end{equation}
where the filtration on $\U/\mathcal{J}_D$ is induced from $\Gamma(\mathcal{D})$.
The algebra $\C[\Orb]$ is finite over $\operatorname{im}\mu^*=\C[\overline{\Orb}]$.
It follows that $\gr\Gamma(\mathcal{D})$ is finitely generated and the GK-dimension
equals $\dim \Orb$. So $\Gamma(\mathcal{D})$ is a HC bimodule.
From the inclusion $\U/\J_{\mathcal{D}}\hookrightarrow \Gamma(\mathcal{D})$
we deduce that the associated variety of $\J_{\mathcal{D}}$
is exactly $\overline{\Orb}$. Also from here we see that the multiplicity is $1$.
Further, the algebra $\Gamma(\mathcal{D})$ has no zero divisors. It follows
that $\mathcal{J}_D$ is a completely prime ideal. It follows that it is primitive.


Given $\J$ as in (b), we can get a quantization  $\mathcal{D}$ as in (a)
by microlocalizing $\U/\J$ to the open subvariety $\Orb\subset\operatorname{Spec}(\gr(\U/\J))$. Let us denote the resulting
quantization by $\mathcal{D}_{\J}$.

Now let us check that the maps $\J\mapsto \mathcal{D}_\J$ and $\mathcal{D}\rightarrow
\mathcal{J}_{\mathcal{D}}$ are dual to one another. First of all,
$\Phi$ factors through $\U/\J_{\mathcal{D}}\rightarrow \Gamma(\mathcal{D})$.
This gives rise to a   homomorphism
$\mathcal{D}_{\J_{\mathcal{D}}}\rightarrow \mathcal{D}$
of filtered sheaves of algebras. From (\ref{eq:inclusions}),
we see that on the level of associated graded sheaves, this
homomorphism is the identity. So  $\mathcal{D}_{\J_{\mathcal{D}}}\xrightarrow{\sim} \mathcal{D}$.

On the other hand, we see that $\J\subset \J_{\mathcal{D}_\J}$. Since both ideals are prime
and have the same associated variety, the equality $\J=\J_{\mathcal{D}_\J}$ follows
now from \cite[Corollar 3.6]{BK}.
\end{proof}

Now let us assume that $H^1(X,\mathcal{O}_X)=H^2(X,\mathcal{O}_X)=0$. In this case there is
classification of quantizations, see \cite{BK_quant,quant_iso}.  Namely, there is a natural
bijection between the set of isomorphism  classes of filtered quantizations of $X$
and $H^2_{DR}(X)$. If a reductive group $G$ is connected, then the $G$-action lifts
to any filtered quantization, see the proof of \cite[Proposition 6.2]{BK}.

\section{Proofs of the main results}\label{S_proofs}
\subsection{$Q$-equivariance}\label{SS_Q_equiv}
Here we will prove two results that give sufficient conditions for a $\q$-action on
a finite dimensional $\Walg$-module $V$ to integrate to an action of
$Q^\circ$.

\begin{Lem}\label{Lem:1dim_equivar}
Let $V$ be a $1$-dimensional $Q$-stable $\Walg$-module. Assume $\Orb$
is birationally rigid. Then the action of $\q$ on $V$ is trivial.
\end{Lem}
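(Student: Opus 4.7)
The plan is to identify the character by which $\q$ acts on $V$ with an element of $(\q^*)^Q$ and then to invoke the vanishing of this space, which is proved (inside the proof of Lemma \ref{Lem:DR_cohom}) precisely under the birational rigidity hypothesis.

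Concretely, a $1$-dimensional $\Walg$-module is the same datum as an algebra character $\chi\colon\Walg\rightarrow\C$; composing $\chi$ with the Lie algebra embedding $\q\hookrightarrow\Walg$ recalled in Section \ref{SS_Cat_O} gives the character $\lambda\in\q^*$ by which $\q$ acts on $V$. The hypothesis that $V$ is $Q$-stable means that $V$ carries a compatible $Q$-action, which, because $\dim V=1$, is a character $\nu\colon Q\rightarrow\C^\times$. Since the $Q$-action on $\Walg$ is Hamiltonian, the embedding $\q\hookrightarrow\Walg$ intertwines the adjoint action of $Q$ on $\q$ with the $Q$-action on $\Walg$; writing out the compatibility of $\nu$ and $\chi$ on $V$ then yields $\lambda(\operatorname{Ad}(q)\xi)=\lambda(\xi)$ for all $q\in Q$, $\xi\in\q$, so $\lambda\in(\q^*)^Q$.

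Next I would invoke the geometric input already used in the proof of Lemma \ref{Lem:DR_cohom}: because $\Orb$ is birationally rigid, Proposition \ref{Prop:qfacterm} says that $\operatorname{Spec}(\C[\Orb])$ is $\mathbb{Q}$-factorial, so $\Hom(Z_G(e),\C^\times)$ is finite; since $Q$ and $Z_G(e)$ share the same reductive quotient, this forces $(\q^*)^Q=0$. Combined with the previous paragraph this gives $\lambda=0$, which is exactly the asserted triviality of the $\q$-action. I do not see a serious obstacle; the only point worth verifying is that the homomorphism $\q\to\Walg$ from Section \ref{SS_Cat_O} is literally the differential of the Hamiltonian $Q$-action with no additive shift, so that $\lambda=0$ means ``trivial'' and not merely ``trivial up to some character.'' This is built into the construction of $\Walg$ as a Hamiltonian quantization and is consistent with the phrasing of the lemma.
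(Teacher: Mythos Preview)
Your proposal is correct and follows essentially the same argument as the paper: both show that $Q$-stability of the one-dimensional module forces the $\q$-character to lie in $(\q^*)^Q$, and then appeal to the vanishing $(\q^*)^Q=0$ established in the proof of Lemma~\ref{Lem:DR_cohom} under the birational rigidity hypothesis. The paper's version is terser (it reads ``$Q$-stable'' directly as $\chi=q.\chi$ rather than passing through an auxiliary character $\nu$ of $Q$), but the substance is identical.
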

So the action of $\q$ on $V$ obviously integrates to an action of $Q^\circ$.
\begin{proof}
The proof of Lemma \ref{Lem:DR_cohom} shows that $(\q^*)^Q=\{0\}$. Let $\chi$
be a character of the $\q$-action on $V$. Since $V$ is $Q$-stable, we see
that $\chi=q.\chi$ for any $q\in Q$ so $\chi$ is $Q$-stable. It follows that
$\chi$ is zero.
\end{proof}

\begin{Prop}\label{Prop:integr_regular}
Let $G$ be simply connected. Let $V$ be an irreducible $\Walg$-module
with integral central character. Then the $\q$-action on $V$
integrates to a $Q^\circ$-action if and only if the character $\delta$
recalled in Section \ref{SS_Cat_O} is integral.
\end{Prop}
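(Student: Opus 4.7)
The plan is to realize $V = L^\theta(N)$ in the category $\OCat^\theta_\Walg$ for a regular dominant integral $\theta\in\tf:=\operatorname{Lie}(T)$ (with $T\subset Q^\circ$ a maximal torus), and then read off the $\tf$-weights of $V$ via the identity $\iota(\xi) = \xi + \langle\delta,\xi\rangle$ from Section \ref{SS_Cat_O}. Since $\q$ is reductive and $V$ is finite-dimensional, the $\q$-action integrates to the simply connected cover of $Q^\circ$ automatically, and it descends to $Q^\circ$ if and only if every $\tf$-weight of $V$ lies in the character lattice $X^*(T)$. As in the setup of Lemma \ref{Lem:delta_inegrality}, I would take $\tf = Z(\g_0)$, which ensures that $\q_0 := Z_{\g_0}(e,h,f)$ equals $\tf$ and that the image of $\tf$ in $\underline{\Walg}$ is central.

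Next I would analyze how the $\tf$-weights of $V$ arise. By the PBW-style decomposition of $\Walg$ over $\Walg_{\geqslant 0}$, every $\tf$-weight of $\Delta^\theta(N)$ --- hence of its simple quotient $V$ --- is obtained from a $\tf$-weight of $N$ (viewed as $\Walg^0$-module) by adding $\tf$-weights appearing in $\Walg_{<0}$. The latter weights are restrictions of $\g$-roots to $\tf$, and since $G$ is simply connected they lie in $X^*(T)$. Hence integrality of all $\tf$-weights of $V$ reduces to integrality of the unique $\tf$-weight of $N$ as $\Walg^0$-module; uniqueness is because $\tf$ is central in $\underline{\Walg}$ and $N$ is irreducible.

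Applying $\iota(\xi) = \xi + \langle\delta,\xi\rangle$, this $\Walg^0$-weight equals $\mu + \delta$, where $\mu$ is the $\tf$-weight of $N$ viewed as an $\underline{\Walg}$-module. Via the Ginzburg isomorphism $Z(U(\g_0)) \xrightarrow{\sim} Z(\underline{\Walg})$ and the inclusion $\tf \subset Z(U(\g_0))$, $\mu$ is the restriction to $\tf$ of the integral central character of $N$; since $G_0$ is a Levi of the simply connected $G$, this restriction lies in $X^*(T)$. Combining, the $\tf$-weights of $V$ all lie in $X^*(T)$ if and only if $\delta$ does, giving the equivalence claimed in the proposition.

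The main obstacle is the last identification: comparing the two a priori distinct embeddings $\tf \hookrightarrow \underline{\Walg}$ --- through the Hamiltonian embedding $\q_0 \hookrightarrow \underline{\Walg}$ on one hand, and through $\tf \subset Z(U(\g_0)) \xrightarrow{\sim} Z(\underline{\Walg})$ on the other --- and checking that any discrepancy has been absorbed into the normalization of $\delta$ fixed in Section \ref{SS_Cat_O}. This requires tracking the construction of the quantum comoment map for $\Walg$ carefully, but it is the only delicate point; everything else reduces to elementary bookkeeping with weights.
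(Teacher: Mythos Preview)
Your proposal is correct and follows essentially the same route as the paper: realize $V=L^\theta(N)$, reduce integrability to the integrality of the single $\tf$-weight on $N$, and then use the shift $\iota(\xi)=\xi+\langle\delta,\xi\rangle$ together with the fact that the corresponding $\underline{\Walg}$-module has integral central character. The paper handles your ``main obstacle'' (and the implicit claim that $N$ has integral central character as an $\underline{\Walg}$-module) by a direct citation of \cite[Corollary 4.8]{BGK}, which makes the comparison of the two $\tf$-embeddings unnecessary; once you invoke that result, the remaining bookkeeping is exactly as you describe.
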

\begin{proof}
Let us pick a maximal torus $T\subset Q^\circ$ and let $\tf\subset \q$ be its Lie algebra.
What we need to show is that $\tf$ acts diagonalizably and with characters lying in
the character lattice of $T$. Pick a regular integral element $\theta\in \tf$. Let
$\g^0,\Walg^0$ be as in Section \ref{SS_Cat_O}. Let us write $G_0$ for the connected subgroup of $G$
corresponding to $\g_0$. Note that $T=Z(G_0)^\circ$. Since the group $G$
is simply connected, we see that the character group
of $T$  is spanned by the fundamental weights that vanish on the roots for $\g_0$.

We have $V=L^\theta_{\Walg}(V^0)$, where $V^0$ is an
irreducible $\Walg^0$-module. Our claim reduces to checking that the character of the
$\tf$-action on $V^0$ is integral.

Now recall the isomorphism $\Walg^0\xrightarrow{\sim} \underline{\Walg}$ that induces
the shift by $\delta$ on $\tf$. Let $\underline{V}$ be the $\underline{\Walg}$-module
corresponding to $V^0$. According to \cite[Corollary 4.8]{BGK}, this module has integral central character.
It follows that $\tf$ acts on $\underline{V}$ with integral character, i.e., it action
integrates to $T$. So the action of $\tf$ on $V^0$ integrates to $T$ if and only if
$\delta$ is integral.
\end{proof}

\begin{Cor}\label{Cor:A_stab}
Let $\Orb$ be  birationally rigid and special.  Then the following are equivalent.
\begin{enumerate}
\item $\Walg$ has an $A$-stable 1-dimensional module with integral central character.
\item $\delta$ is integral.
\end{enumerate}
\end{Cor}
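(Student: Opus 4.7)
The proof splits into the two implications, using Lemma \ref{Lem:1dim_equivar}, Proposition \ref{Prop:integr_regular}, and the existence half of Theorem \ref{Thm:main}(1) together with Lemma \ref{Lem:quant_nilp}. I will work under the harmless reduction that $G$ is simply connected, since both $\Walg$ and $A=Q/Q^\circ$ depend only on $\g$.

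For the direction $(1)\Rightarrow(2)$, I first observe that $A$-stability is automatically equivalent to $Q$-stability for one-dimensional modules. A 1-dim $\Walg$-module is a character $\chi\colon\Walg\to\C$, and for every $\xi\in\q$ and $a\in\Walg$ one has $\chi([\xi,a])=\chi(\xi)\chi(a)-\chi(a)\chi(\xi)=0$, so the infinitesimal $\q$-action on characters vanishes, and hence so does the $Q^\circ$-action on the set of 1-dim modules. Given a 1-dim $A$-stable (hence $Q$-stable) $V$ with integral central character, Lemma \ref{Lem:1dim_equivar} yields triviality of the $\q$-action on $V$, which integrates trivially to $Q^\circ$; Proposition \ref{Prop:integr_regular} then forces $\delta$ to be integral.

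For the direction $(2)\Rightarrow(1)$, Theorem \ref{Thm:main}(1) and Lemma \ref{Lem:quant_nilp} produce a primitive ideal $\J$ with associated variety $\overline{\Orb}$ and $\U/\J$ of multiplicity one on $\Orb$; by parts (2), (4), (5), (8) of Lemma \ref{Lem:dag_prop}, $\Walg/\J_{\dagger}=(\U/\J)_{\dagger}$ is a one-dimensional $Q$-equivariant $\Walg$-bimodule, and its left module structure gives a 1-dim $A$-stable $\Walg$-module $V_0$. It remains to verify that $V_0$ has integral central character when $\delta$ is integral. For this I unwind $V_0$ in the category $\OCat^\theta_\Walg$: one checks that $\Walg_{>0}$ acts by zero on any 1-dim module (since $ka=[\theta,a]$ for $a\in\Walg_k$ with $k>0$, and characters kill commutators), so $V_0^0=V_0$ is a 1-dim $\Walg^0$-module on which $\tf$ acts by zero by Lemma \ref{Lem:1dim_equivar}. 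Through the shift-by-$\delta$ isomorphism $\Walg^0\cong\underline{\Walg}$ recalled in Section \ref{SS_Cat_O}, the corresponding 1-dim $\underline{\Walg}$-module $\underline{V_0}$ has $\tf$-character $-\delta$, which is integral by hypothesis. Running the chain from the proof of Proposition \ref{Prop:integr_regular} in reverse (for the W-algebra attached to the distinguished nilpotent $e$ in the minimal Levi $\g_0$, integrality of the $\tf$-character on a 1-dim module forces integrality of its central character) yields integrality of the central character of $\underline{V_0}$, and hence of $V_0$ via the Harish-Chandra correspondence $Z(\U)\leftrightarrow Z(\U(\g_0))$.

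The main technical obstacle is the last step of the converse direction: inverting the implication ``integral CC $\Rightarrow$ integral $\tf$-character'' from \cite[Corollary 4.8]{BGK}. Because $e$ is distinguished in the minimal Levi $\g_0$, the 1-dim $\underline{\Walg}$-modules are sharply constrained (their characters are determined up to a finite ambiguity by the $\tf$-action), so one can hope to handle this either by direct analysis of the Harish-Chandra parametrization for $\g_0$, or via an inductive application of Theorem \ref{Thm:main}(1) to the smaller pair $(\g_0,e)$.
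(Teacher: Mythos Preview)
Your direction $(1)\Rightarrow(2)$ is correct and is exactly what the placement of the corollary (immediately after Lemma~\ref{Lem:1dim_equivar} and Proposition~\ref{Prop:integr_regular}) suggests: a $Q$-stable one-dimensional module has trivial $\q$-action by Lemma~\ref{Lem:1dim_equivar}, hence is $Q^\circ$-equivariant, and then Proposition~\ref{Prop:integr_regular} forces $\delta$ to be integral. Your reduction of $A$-stability to $Q$-stability for characters is the only extra ingredient, and it is fine.

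The direction $(2)\Rightarrow(1)$, however, has a genuine gap, which you yourself flag. The implication you need --- that integrality of the $\tf$-character of $\underline{V_0}$ forces integrality of its central character --- is false without further input. Writing $\g_0=\tf\oplus\g_0'$ with $\g_0'=[\g_0,\g_0]$, one has $\underline{\Walg}\cong S(\tf)\otimes\Walg'$ where $\Walg'$ is the W-algebra of $(\g_0',e)$, so a one-dimensional $\underline{\Walg}$-module is a $\tf$-character together with a one-dimensional $\Walg'$-module, and integrality of the $\g_0$-central character requires \emph{both} pieces to be integral. Your argument controls only the $\tf$-piece. Your proposed inductive fix also does not go through: $e$ is distinguished in $\g_0'$, but distinguished orbits are typically not birationally rigid (the regular nilpotent being the obvious example), so neither Theorem~\ref{Thm:main}(2) nor the present corollary is available for the pair $(\g_0',e)$.

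The paper does not supply a self-contained argument for $(2)\Rightarrow(1)$ at this point; the substance of that implication is deferred to the proof of Theorem~\ref{Thm:main}(3), and uses a different mechanism. There one takes your $V_0$ together with \emph{any} finite-dimensional irreducible $U$ with integral central character (available since $\Orb$ is special). Both are $Q^\circ$-equivariant --- $V_0$ by Lemma~\ref{Lem:1dim_equivar}, and $U$ by Proposition~\ref{Prop:integr_regular}, the latter using the hypothesis that $\delta$ is integral --- so $\B:=Q*_{Q^\circ}\Hom_\C(U,V_0)$ is a $Q$-equivariant $\Walg$-bimodule. Applying the equivalence Theorem~\ref{Thm:equi} yields a nonzero Harish--Chandra $\U$-bimodule $\B^\dagger$ whose right central character (that of $U$) is integral; hence so is its left central character, which is that of $V_0$. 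This bimodule transfer is what replaces your attempted inversion of \cite[Corollary~4.8]{BGK}.
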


In particular, we see that $\delta$ is integral when $\g$ is classical, \cite[Section 5.4]{W_dim},
or when $\Orb$ is, in addition, rigid, \cite[Theorem B]{Premet3}. We remark that checking
the integrality of $\delta$ involves only an elementary combinatorics and can be done in all these
cases directly.

\subsection{Equivalence theorem}
\begin{Thm}\label{Thm:equi}
Suppose that $\operatorname{codim}_{\overline{\Orb}}\partial\Orb\geqslant 4$.
Then the functor $\bullet_{\dagger}:\HC^G_{\Orb}(\U)\rightarrow \HC^Q_{fin}(\Walg)$
is a category equivalence.
\end{Thm}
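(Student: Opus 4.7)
The plan is to reduce the theorem to essential surjectivity, since Lemma \ref{Lem:dag_prop}(6) already ensures that $\bullet_\dagger:\HC^G_{\Orb}(\U)\rightarrow \HC^Q_{fin}(\Walg)$ is a fully faithful functor whose image is closed under subquotients. Given $\underline{\B}\in \HC^Q_{fin}(\Walg)$, I would take the candidate preimage $\underline{\B}^\dagger\in \HC^G_{\overline{\Orb}}(\U)$ supplied by the right adjoint constructed in Section \ref{SS_HC_fun}, and reduce the theorem to proving that the counit $(\underline{\B}^\dagger)_\dagger\rightarrow \underline{\B}$ is an isomorphism.

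To analyze the counit, I would equip $\underline{\B}^\dagger$ with the canonical good filtration arising from its construction (starting with the trivial filtration on $\underline{\B}$ and tracking it through the Rees bimodule, completion at $\chi$, $G$-finite part, $A$-invariants, and $\C^\times$-finite part; finite generation is guaranteed by Lemma \ref{Lem:fin_part_fin_gen}). By Lemma \ref{Lem:dag_prop}(4), $\gr((\underline{\B}^\dagger)_\dagger)$ is identified with the pullback of $\mathcal{F}:=\gr(\underline{\B}^\dagger)$ to the Slodowy slice $S$. Since all filtrations involved are complete and separated, the statement that the filtered counit is an isomorphism reduces to the assertion that the induced graded map $\mathcal{F}|_S\rightarrow \gr\underline{\B}$ is an isomorphism.

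The sheaf $\mathcal{F}$ is a $G$-equivariant coherent sheaf supported on $\overline{\Orb}$. By the construction of $\bullet^\dagger$, its formal completion at $\chi$ is obtained as the $A$-invariant, $G$-finite subsheaf of the classical model $\C[V]^{\wedge_0}\widehat{\otimes}\gr\underline{\B}^{\wedge_\chi}$, and the map $\mathcal{F}|_S\rightarrow \gr\underline{\B}$ to be checked is the $\Weyl^{\wedge_0}$-centralizer of the canonical inclusion of this subsheaf into the model. Injectivity is immediate; the content is surjectivity, namely that the $G$-finite-part operation recovers the full slice data at $\chi$ without loss.

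This surjectivity step is the main obstacle, and it is precisely where the hypothesis $\operatorname{codim}_{\overline{\Orb}}\partial\Orb\geqslant 4$ enters. Combined with the symplectic-singularity structure on $\Spec(\C[\Orb])$ from Proposition \ref{Prop:sympl}, the codimension hypothesis supports a Hartogs/depth-type extension argument on the normalization: a $G$-equivariant coherent module supported in the formal slice through $\chi$ extends uniquely to a $G$-equivariant reflexive coherent sheaf on $\Spec(\C[\Orb])$, and the $G$-finite-part functor $\mathcal{F}_G$ realizes precisely this extension without spurious contributions supported in $\partial\Orb$. Carrying out this extension argument carefully---controlling coherent sheaves across the codimension-$4$ singular boundary while respecting both $G$-equivariance and the Hamiltonian structure---is the technical heart of the proof; once it is in hand, the counit becomes an isomorphism and essential surjectivity follows.
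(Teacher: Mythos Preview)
Your reduction to essential surjectivity and to the counit $(\underline{\B}^\dagger)_\dagger\to\underline{\B}$ is correct, and passing to associated graded is the right instinct. But the mechanism you propose for the ``technical heart'' has a genuine gap.

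First, a subtlety you slide past: $\gr(\underline{\B}^\dagger)$ is \emph{not} a priori equal to $\mathcal{F}_G$ of the classical model $\B'_\hbar/\hbar\B'_\hbar$. There is only an injection $\mathcal{F}_G(\B'_\hbar)/\hbar\,\mathcal{F}_G(\B'_\hbar)\hookrightarrow \mathcal{F}_G(\B'_\hbar/\hbar\B'_\hbar)$, and its cokernel is the $\hbar$-torsion in $R^1\mathcal{F}_G(\B'_\hbar)$. So before you can even pose the surjectivity question at the level you describe, you must control this first derived functor. This is the content of the paper's Lemma~\ref{Lem:fin_gen}, parts (2) and (3): one shows $R^1\mathcal{F}_G(\B'_\hbar)$ is finitely generated with $R^1\mathcal{F}_G(\B'_\hbar)/\hbar$ supported on $\partial\Orb$, and hence the cokernel above is supported on $\partial\Orb$ (so invisible in $\HC^G_{\Orb}(\U)$).

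Second, your proposed tool---reflexive extension on $\Spec(\C[\Orb])$ using the symplectic-singularity structure of Proposition~\ref{Prop:sympl}---is not what makes this work, and Proposition~\ref{Prop:sympl} is not used here. The actual argument runs as follows: for objects annihilated by $\hbar$ and the ideal of $\Orb$, the functor $\mathcal{F}_G$ computes global sections of $G$-equivariant vector bundles $\mathcal{V}_N$ on (a cover of) $\Orb$, and $R^i\mathcal{F}_G$ computes $H^i(\mathcal{V}_N)\cong H^{i+1}_{\partial\Orb}(\Gamma(\mathcal{V}_N))$. Grothendieck's finiteness theorem for local cohomology (SGA2, Expos\'e~VIII) then gives that $H^{i+1}_{\partial\Orb}$ is coherent and supported on $\partial\Orb$ once $i+1<\operatorname{codim}\partial\Orb$; this is precisely where $\operatorname{codim}\geqslant 4$ enters. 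One bootstraps from these basic objects to general $\B'_\hbar$ by filtering modulo $\hbar$ and using the long exact sequence in $R^\bullet\mathcal{F}_G$. The conclusion is $\operatorname{mult}_{\Orb}(\underline{\B}^\dagger)=\dim\underline{\B}$, hence $\dim(\underline{\B}^\dagger)_\dagger=\dim\underline{\B}$ by Lemma~\ref{Lem:dag_prop}(8), and the equivalence follows. Your ``Hartogs/depth'' intuition is pointing toward local cohomology, but you need to name the derived functor $R^1\mathcal{F}_G$ explicitly and invoke the SGA2 finiteness bound rather than reflexive-sheaf extension.
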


We will need to introduce some auxiliary categories related to categories of HC bimodules.
Recall that we write $\U^{\diamondsuit}_\hbar$ for the $\g$-finite part of $\U_\hbar^{\wedge_\hbar}$.
We write
$\operatorname{HC}^G(\U^{\diamondsuit}_\hbar)$ for the category of $G$-equivariant
finitely generated $\U^{\diamondsuit}_\hbar$-bimodules.
Note that any such module $\B_\hbar$ becomes a
graded $\U_\hbar$-bimodule by the formula $b\xi=\xi b-\hbar^2 \xi.b$, where $b\in \B_\hbar,
\xi\in \g$ and $\xi.b$
is the image of $b$ under the derivation coming from the $G$-action.
We write $\widetilde{\operatorname{HC}}^G(\U^{\diamondsuit}_\hbar)$
for the ind completion of $\operatorname{HC}^G(\U^{\diamondsuit}_\hbar)$,
it consists of all $G$-equivariant $\U_\hbar^{\diamondsuit}$-modules.
Next, we consider the category $\HC(\U_\hbar^{\wedge_\chi})$ consisting of
the $\g$-equivariant finitely generated $\U_\hbar^{\wedge_\chi}$-modules.
We also consider the subcategories $\HC^G_{\overline{\Orb}}(\U_\hbar^{\diamondsuit})
\subset \HC^G(\U_\hbar^{\diamondsuit})$ of all modules supported on $\overline{\Orb}$
and $\HC_{\Orb^{\wedge_\chi}}(\U_\hbar^{\wedge_\chi})\subset \HC(\U_\hbar^{\wedge_\chi})$
of all modules supported on $\Orb^{\wedge_\chi}$.

Considering the functor $\mathcal{F}=\mathcal{F}_G: \HC_{\Orb^{\wedge_\chi}}(\U_\hbar^{\wedge_\chi})\rightarrow
\HC^G_{\overline{\Orb}}(\U^{\diamondsuit}_\hbar)$ be the functor of taking $G$-finite sections extending
the functor considered in Section \ref{SS_HC_fun}, it is given by $$\mathcal{B}_\hbar'\mapsto
\bigoplus_V V\otimes \Hom_{U(\g)}(V,\mathcal{B}_\hbar'),$$ where the sum is
taken over all irreducible finite dimensional $G$-modules $V$. So the functor $\mathcal{F}$ admits
derived functors $$R^i\mathcal{F}: \mathcal{B}_\hbar'\mapsto
\bigoplus_V V\otimes \operatorname{Ext}^i_{U(\g)}(V,\mathcal{B}_\hbar'):
\HC_{\Orb^{\wedge_\chi}}(\U_\hbar^{\wedge_\chi})\rightarrow
\widetilde{\HC}^G(\U_\hbar^{\diamondsuit}).$$

Here is a main technical result that is needed to establish Theorem \ref{Thm:equi}.

\begin{Lem}\label{Lem:fin_gen}
Let $\B'_\hbar\in\HC_{\Orb^{\wedge_\chi}}(\U_\hbar^{\wedge_\chi})$
and $B:=\B'_\hbar/\hbar \B'_\hbar$.
Then the following is true
\begin{enumerate}
\item all $\g$-isotypic components in $R^1\mathcal{F}(\B'_\hbar)$ are finitely
generated $\C[[\hbar]]$-modules.
\item $R^1\mathcal{F}(\B'_\hbar)$ is a finitely generated
left $\U^{\diamondsuit}_\hbar$-module and $R^1\mathcal{F}(\B'_\hbar)/\hbar R^1\mathcal{F}(\B'_\hbar)$
is supported on $\partial\Orb$.
\item The cokernel of the natural homomorphism
$\mathcal{F}(\B'_\hbar)/\hbar \mathcal{F}(\B'_\hbar)\hookrightarrow
\mathcal{F}(B)$ is supported on $\partial\Orb$.
\end{enumerate}
\end{Lem}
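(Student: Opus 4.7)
The plan is to reduce all three statements to a single claim about $R^1\mathcal{F}(B)$ and then bootstrap from the classical limit back to $\B'_\hbar$ via the $\hbar$-multiplication long exact sequence. Applying $\operatorname{Ext}^\bullet_{U(\g)}(V, -)$ for each finite-dimensional irreducible $G$-module $V$ to the short exact sequence
$$0 \to \B'_\hbar \xrightarrow{\cdot\hbar} \B'_\hbar \to B \to 0$$
and summing after tensoring with $V$ yields, for every $i \geq 0$, the short exact sequence
$$0 \to R^i\mathcal{F}(\B'_\hbar)/\hbar R^i\mathcal{F}(\B'_\hbar) \hookrightarrow R^i\mathcal{F}(B) \twoheadrightarrow R^{i+1}\mathcal{F}(\B'_\hbar)[\hbar] \to 0,$$
where $[\hbar]$ denotes $\hbar$-torsion. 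In particular the cokernel appearing in (3) is exactly $R^1\mathcal{F}(\B'_\hbar)[\hbar]$, and $R^1\mathcal{F}(\B'_\hbar)/\hbar R^1\mathcal{F}(\B'_\hbar)$ embeds into $R^1\mathcal{F}(B)$.

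The \emph{core claim}, from which everything flows, is that $R^1\mathcal{F}(B)$ is a finitely generated $G$-equivariant $\C[\g^*]$-module whose set-theoretic support is contained in $\partial\Orb$. To prove it I would reinterpret $R^\bullet\mathcal{F}$ sheaf-theoretically. The adjoint action of $\q = \operatorname{Lie}(Q)$ on $B$ integrates to $Q^\circ$, so induction along $G \times_{Q^\circ}-$ produces a $G \times \C^\times$-equivariant coherent sheaf $\widetilde B$ on the formal neighborhood of $\Orb$ inside $\g^*$, carrying a residual action of $A := Q/Q^\circ$. One then identifies $R^\bullet\mathcal{F}(B)$ with the $A$-invariant $G$-locally finite parts of the (formal) coherent cohomology of $\widetilde B$. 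Over the open set $U := \g^* \setminus \partial\Orb$ the orbit $\Orb$ is closed, and since $Q^\circ$ is reductive, the $G$-locally finite cohomology of a $G\times_{Q^\circ}$-induced sheaf on the formal neighborhood of $\Orb$ within $U$ vanishes in positive degree by a Whitehead-type vanishing: the computation reduces to algebraic $Q^\circ$-cohomology of a rational module, which vanishes in positive degree because the category of rational modules over a reductive group in characteristic zero is semisimple. Hence $R^1\mathcal{F}(B)|_U = 0$, which gives the support statement; finite generation of $R^1\mathcal{F}(B)$ over $\C[\g^*]$ is extracted from coherence of $\widetilde B$ via the Noetherian arguments of Lemma \ref{Lem:fin_part_fin_gen}.

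Once the core claim is in hand, conclusion (2) follows from the embedding $R^1\mathcal{F}(\B'_\hbar)/\hbar R^1\mathcal{F}(\B'_\hbar) \hookrightarrow R^1\mathcal{F}(B)$: the quotient is finitely generated over $\C[\g^*]$ and supported on $\partial\Orb$, and an $\hbar$-adic Nakayama argument, using that $R^1\mathcal{F}(\B'_\hbar)$ is $\hbar$-adically complete (inherited via inverse limit from $\B'_\hbar$ subject to Mittag-Leffler), lifts these properties to $R^1\mathcal{F}(\B'_\hbar)$ itself as a $\U^\diamondsuit_\hbar$-module. Conclusion (3) is then immediate since $R^1\mathcal{F}(\B'_\hbar)[\hbar] \subseteq R^1\mathcal{F}(\B'_\hbar)$ inherits the support on $\partial\Orb$. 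For (1), each $V$-isotypic component of $R^1\mathcal{F}(B)$ is a finitely generated $\C[\g^*]^G$-module supported on the image of $\partial\Orb \subset \Nilp$ in $\g^*/\!\!/G = \tf^*/W$, which equals $\{0\}$, hence is finite-dimensional over $\C$; the $V$-indexed analogue of the long exact sequence together with $\hbar$-adic completeness give that the corresponding $V$-isotypic component of $R^1\mathcal{F}(\B'_\hbar)$ is finitely generated over $\C[[\hbar]]$. The main obstacle will be the sheaf-theoretic identification of $R^\bullet\mathcal{F}$ with the formal coherent cohomology of $\widetilde B$ and the careful verification of the Whitehead-type vanishing over $U$; the rest is a diagram chase plus Nakayama.
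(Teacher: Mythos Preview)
Your overall architecture---extract the long exact sequence from $0\to\B'_\hbar\xrightarrow{\hbar}\B'_\hbar\to B\to 0$, reduce everything to a core claim about $R^1\mathcal{F}(B)$, then bootstrap back via Nakayama and completeness---matches the paper's Steps 4--6. The gap is in the core claim itself.

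Your argument never invokes the hypothesis $\operatorname{codim}_{\overline{\Orb}}\partial\Orb\geqslant 4$, yet the finite generation of $R^1\mathcal{F}(B)$ over $\C[\g^*]$ genuinely depends on it. Your appeal to ``coherence of $\widetilde{B}$ via the Noetherian arguments of Lemma~\ref{Lem:fin_part_fin_gen}'' does not work: that lemma treats $\mathcal{F}$, not $R^1\mathcal{F}$, and higher cohomology of a coherent sheaf on an open subvariety of an affine variety is \emph{not} finitely generated in general. The paper handles this by first filtering $B$ so that the graded pieces are of the form $\mathcal{V}_N^{\wedge_\chi}$, then identifying $R^i\mathcal{F}(\mathcal{V}_N^{\wedge_\chi})\cong H^i(\tilde{\Orb},\mathcal{V}_N)$ (citing \cite[Section 3.2]{HC}), then rewriting $H^1(\tilde{\Orb},\mathcal{V}_N)=H^2_{\partial\tilde{\Orb}}(V_N)$ and invoking Grothendieck's finiteness theorem for local cohomology \cite[Expos\'e VIII, Cor.~2.3]{Groth}, which is precisely where the inequality $2<\operatorname{codim}\partial\Orb$ enters. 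Without this, your chain of deductions for (1), (2), (3) collapses: each step uses that $R^1\mathcal{F}(B)$ is finitely generated (for (1), to conclude the $V$-isotypic component is a finitely generated $\C[\g^*]^G$-module; for (2), as the base of your Nakayama lift).

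A secondary problem is the ``Whitehead-type vanishing''. The derived functor $R^i\mathcal{F}$ is built from $\Ext^i_{U(\g)}(V,-)$, i.e., Lie algebra cohomology $H^i(\g;V^*\otimes -)$, and this does \emph{not} vanish in positive degree for rational $G$-modules (already $H^3(\g;\C)\neq 0$ for simple $\g$). So the semisimplicity of rational $Q^\circ$-modules does not directly give $R^1\mathcal{F}(B)|_U=0$. The support statement is nevertheless true, but the honest route to it is again the local-cohomology identification: $H^2_{\partial\tilde{\Orb}}(V_N)$ is tautologically supported on $\partial\tilde{\Orb}$. That identification is exactly the ``sheaf-theoretic identification of $R^\bullet\mathcal{F}$'' you flag as the main obstacle; it is the substance of the paper's Step~1, and it is what makes both halves of the core claim go through.
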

\begin{proof}
The proof is in several steps. Let us write $\tilde{G}$ for the simply
connected cover of $G$.

{\it Step 1}. We start by computing $R^i\mathcal{F}(M)$ for certain
$N\in \HC_{\overline{\Orb}^{\wedge_\chi}}(\U_\hbar^{\wedge_\chi})$. We will consider
the objects annihilated by both $\hbar$ and the ideal of $\Orb$. The category of such objects
was shown in \cite[Section 3.2]{HC} to be equivalent to the category of $Z_{\tilde{G}}(e)^\circ$-modules:
an object $N\in Z_{\tilde{G}}(e)\operatorname{-mod}$ gets sent to the sections $\mathcal{V}^{\wedge_\chi}_N$
of the vector  bundle $\mathcal{V}_N=\tilde{G}*_{Z_{\tilde{G}}(e)^\circ}N$ on the formal neighborhood of $Z_{\tilde{G}}(e)^{\circ}$ in $\tilde{G}/Z_{\tilde{G}}(e)^\circ$. Moreover, it was computed in \cite[Section 3.2]{HC} that
$\mathcal{F}(\mathcal{V}^{\wedge_\chi}_N)=\Gamma(\mathcal{V}_N)$. By the uniqueness of
classical derived functors (where the source category is that of rational $Z_{\tilde{G}}(e)^\circ$-modules),
we see that $R^i\mathcal{F}(\mathcal{V}^{\wedge_\chi}_N)=H^i(\mathcal{V}_N)$
for all $i$ (all global sections and cohomology are taken on $\tilde{\Orb}:=\tilde{G}/Z_{\tilde{G}}(e)^\circ$).

{\it Step 2}. Let us prove that $H^1(\mathcal{V}_N)$ is a finitely generated $\C[\tilde{\Orb}]$-module
supported on $\partial \tilde{\Orb}:=X\setminus \tilde{\Orb}$, where we write
$X$ for $\operatorname{Spec}(\C[\tilde{\Orb}])$. Set $V_N:=\Gamma(\mathcal{V}_N)$,
this is a coherent sheaf on $X$ whose restriction to $\tilde{\Orb}$ coincides
with $\mathcal{V}_N$, see \cite[Section 3.2]{HC}.  Then $H^i(\mathcal{V}_N)=H^{i+1}_{\partial\tilde{\Orb}}(V_N)$.
By \cite[Expose VIII, Cor. 2.3]{Groth}, $H^{i+1}_{\partial\tilde{\Orb}}(V_N)$ is
finitely generated (and obviously supported on $\partial \tilde{\Orb}$) provided
$i+1<\operatorname{codim}_{X}\partial\Orb$. The right hand side in this inequality
is at least 4 by the assumptions of the proposition. The claim in the beginning
of this step follows.

{\it Step 3}. Since any object $B$ in $\HC(\U^{\wedge_\chi}_\hbar)$  annihilated
by $\hbar$ admits a finite filtration by objects of the form $\mathcal{V}^{\wedge_\chi}_N$,
we deduce from Step 2 that for any such $B$, the $\C[\g^*]$-module $R\mathcal{F}^1(B)$
is finitely generated and is supported on $\partial\Orb$.

{\it Step 4}. In this step we prove (1). Set $B:=\B'_\hbar/\hbar \B'_\hbar$. Note that, for any finite
dimensional $\g$-module $V$, we have $\dim\operatorname{Ext}^1_{U(\g)}(V,B)<\infty$.
Indeed, the latter is the $V$-isotypic component in the finitely generated module $R^1\mathcal{F}(B)$
over $A:=\C[\g^*]/\operatorname{Ann}(B)$.  The reduced spectrum of $A$ is $\overline{\Orb}$,
therefore $A^G$ is finite dimensional. The inequality  $\dim\operatorname{Ext}^1_{U(\g)}(V,B)<\infty$
follows from the observation that any $\g$-isotypic component in a finitely generated
$A$-module is a finitely generated $A^G$-module.

Let us complete the proof of (1). The space
$\operatorname{Hom}_{\g}(V,R^1\mathcal{F}(\mathcal{B}_\hbar'))$ coincides with
$\Ext^1_{U(\g)}(V, \mathcal{B}_\hbar')$. The Ext groups
$\Ext^1_{U(\g)}(V, \mathcal{B}_\hbar')$ are computed using the Chevalley-Eilenberg
complex. In this complex, all cochains are complete and separated over $\C[[\hbar]]$.
Arguing as in the end of the proof of \cite[Lemma 5.6.3]{GL}, we conclude that
$\Ext^1_{U(\g)}(V, \mathcal{B}_\hbar')$ is finitely generated over $\C[[\hbar]]$.

{\it Step 5}. Let us prove (2). We have the following exact sequence
$$0\rightarrow \mathcal{F}(\mathcal{B}'_\hbar)/\hbar \mathcal{F}(\mathcal{B}'_\hbar)
\rightarrow \mathcal{F}(B)\rightarrow R^1\mathcal{F}(\mathcal{B}_\hbar')
\xrightarrow{\hbar} R^1\mathcal{F}(\mathcal{B}_\hbar')\rightarrow
R^1\mathcal{F}(B).$$
It follows from Step 3 that $R^1\mathcal{F}(B)$
is a finitely generated $\C[\g^*]$-module that is supported on $\partial\Orb$
By Step 4, $R^1\mathcal{F}(\mathcal{B}_\hbar')$ is the direct sum of finitely generated
$\C[[\hbar]]$-modules. The same argument as in the proof of Lemma \ref{Lem:fin_part_fin_gen}
shows that $R^1\mathcal{F}(\mathcal{B}_\hbar')$ is a finitely generated $\U_\hbar^{\diamondsuit}$-module.
Since $R^1\mathcal{F}(B)$ is supported on $\partial \Orb$, then so is
$R^1\mathcal{F}(\mathcal{B}_\hbar')$.

{\it Step 6}. Let us prove (3).
The algebra $\U^{\diamondsuit}_\hbar$ is Noetherian, this is established similarly
to Lemma \ref{Lem:fin_part_fin_gen}. It follows that the kernel of $\hbar$ in
$R^1\mathcal{F}(\mathcal{B}_\hbar')$ admits a finite filtration whose quotients
are subquotients in $R^1\mathcal{F}(\mathcal{B}_\hbar')/\hbar R^1\mathcal{F}(\mathcal{B}_\hbar')$.
Therefore the kernel of $\hbar$ is finitely generated over $\C[\g^*]$ and is supported
on $\partial \Orb$.
\end{proof}

\begin{proof}[Proof of Theorem \ref{Thm:equi}]
We can modify the categories involved in Lemma \ref{Lem:fin_gen} considering weakly
$\C^\times$-equivariant modules for $\U_\hbar^{\diamondsuit}$ and $Q$-equivariant
and Kazhdan weakly $\C^\times$-equivariant bimodules for $\U_\hbar^{\wedge_\chi}$.
Then we have the functor $\mathcal{F}'_G(\bullet)=\mathcal{F}(\bullet)^{A}_{\C^\times\operatorname{-fin}}$
and (2) and (3) of that lemma still hold (with the same proof or as formal corollaries
of those claims). Now pick $\B\in \HC^Q_{fin}(\Walg)$ and apply these claims
to $\B'_\hbar:=\Weyl_\hbar^{\wedge_0}\widehat{\otimes}_{\C[[\hbar]]}R_\hbar(\underline{\B}_\hbar)^{\wedge_\chi}$.
From the construction of $\bullet_{\dagger},\bullet^{\dagger}$ in Section \ref{SS_HC_fun},
we see that (2) and (3) yield $\operatorname{mult}_{\Orb}\mathcal{B}^{\dagger}=\dim \mathcal{B}$.
It follows that $\bullet^{\dagger}$ is exact and faithful when viewed
as a functor $\HC_{fin}^Q(\Walg)\rightarrow \HC_{\Orb}(\U)$. Also it follows
that $\dim (\B^{\dagger})_{\dagger}=\dim \B$.
Since $(\bullet_{\dagger},\bullet^{\dagger})$ is an adjoint pair, our claim
follows.
\end{proof}

\subsection{Consequences of the equivalence theorem}\label{SS_conseq}
\begin{proof}[Proof of Theorem \ref{Thm:main}]
{\it Proof of (1)}. Let $X$ be a $\mathbb{Q}$-factorial terminalization of
$\operatorname{Spec}(\C[\Orb])$. Let $\mathcal{D}$ be a quantization
of $X^{reg}$. The group $\pi_1(X^{reg})$ is finite because
it coincides with  $\pi_1(\Orb)$. It follows that $H^1_{DR}(X^{reg})=0$
and so the $G$-action on $\mathcal{D}$ is Hamiltonian. From $H^1(X^{reg},\mathcal{O}_{X^{reg}})=0$
it follows that $\gr\Gamma(\mathcal{D})=\C[X^{reg}]=\C[\Orb]$.
Since the $G$-action on $\Gamma(\mathcal{D})$ is Hamiltonian, we see that
$\Gamma(\mathcal{D})$ is a Dixmier algebra.

{\it Proof of (2)}. $X^{reg}$ has a unique
quantization because $H^2_{DR}(X^{reg})=\{0\}$. This finishes the proof.

{\it Proof of (3)}. As we have seen in Example \ref{Ex:E7} and Section \ref{SS_Q_equiv},
in all  cases except (e2) and (e3), $\delta$ is integral. Pick a unique $A$-stable 1-dimensional representation
$V$ of $\Walg$ and a finite dimensional representation $U$ of
$\Walg$ with integral central  character. By Lemma \ref{Lem:1dim_equivar} $V$ is $Q^\circ$-equivariant.
By Proposition \ref{Prop:integr_regular}, $U$ is
$Q^\circ$-equivariant. So $\operatorname{Hom}_{\C}(U,V)$ is a
$Q^\circ$-equivariant $\Walg$-bimodule. Set $\mathcal{B}:=Q*_{Q^\circ}
\operatorname{Hom}_{\C}(U,V)$. It is a $Q$-equivariant $\Walg$-bimodule.
Consider $\mathcal{B}^\dagger$. By Theorem \ref{Thm:equi},
this is a nonzero HC $\U$-bimodule. The central character on the right
is integral and therefore so is integral character on the left.

For the three orbits in (e2), there are no $A$-stable irreducible
finite dimensional representations with integral central characters
by Corollary \ref{Cor:A_stab}.
For the orbit in (e3), we have checked in Example \ref{Ex:E8} that $\delta$ is non-integral.
\end{proof}

\begin{Lem}\label{Cor:1dim_unique}
Let $\Orb$ be a birationally rigid orbit but not one of the orbits in (e1).
Then there is a unique primitive ideal $\J\subset \U$ with associated variety
$\overline{\Orb}$ and multiplicity of $\U/\J$ equal to $1$.
\end{Lem}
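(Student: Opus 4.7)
The plan is to translate the statement into a question about quantizations via Lemma \ref{Lem:quant_nilp}, and then invoke the cohomological classification of filtered quantizations of symplectic varieties recalled at the end of the previous section. By that lemma, primitive ideals $\J \subset \U$ with associated variety $\overline{\Orb}$ and $\operatorname{mult}_{\Orb}(\U/\J) = 1$ are in bijection with filtered quantizations $\mathcal{D}$ of $\Orb$ carrying a Hamiltonian $G$-action. It therefore suffices to show that such a quantization exists and is unique up to isomorphism.

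Existence is essentially contained in parts (1) and (2) of Theorem \ref{Thm:main}. Since $\Orb$ is birationally rigid, Proposition \ref{Prop:qfacterm} says $X := \operatorname{Spec}(\C[\Orb])$ is its own $\mathbb{Q}$-factorial terminalization, and the proof of Theorem \ref{Thm:main}(1) produces a Hamiltonian quantization $\mathcal{D}$ of $X^{reg}$; restricting along the open embedding $\Orb \hookrightarrow X^{reg}$ gives the required Hamiltonian quantization of $\Orb$, and hence (by Lemma \ref{Lem:quant_nilp}) a primitive ideal with the desired properties.

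For uniqueness, I would apply directly to $\Orb$ the classification of filtered quantizations of a smooth symplectic variety with vanishing $H^1$ and $H^2$ of the structure sheaf: such quantizations are parametrized by $H^2_{DR}$, and a connected reductive group action lifts automatically. The hypothesis that $\Orb$ is not in (e1) is exactly what is needed to feed that classification: Corollary \ref{Cor:orb_vanish} then gives $H^i(\Orb, \mathcal{O}_{\Orb}) = 0$ for $i = 1, 2$, while Lemma \ref{Lem:DR_cohom} gives $H^2_{DR}(\Orb) = 0$. The moment map $\Phi$ exists because $H^1_{DR}(\Orb) = 0$ for any nilpotent orbit, as noted in the treatment of quantizations of nilpotent orbits above. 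Combining these inputs yields at most one Hamiltonian quantization of $\Orb$ up to isomorphism, and together with existence this proves the lemma.

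The main obstacle, and the reason for excluding (e1), is that the cohomological classification must be applied to $\Orb$ itself rather than only to the larger open subset $X^{reg}$: for the orbits in (e1) the complement $X^{reg} \setminus \Orb$ fails to have codimension $\geq 4$ (Lemma \ref{Lem:smooth_trans}), so the passage from vanishing on $X^{reg}$ (which always holds by Corollary \ref{Cor:vanishing1}) to vanishing on $\Orb$ used in Corollary \ref{Cor:orb_vanish} breaks down. Outside (e1) this codimension condition is satisfied and the argument proceeds cleanly.
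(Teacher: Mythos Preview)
Your proof is correct and follows essentially the same route as the paper: both reduce to Lemma~\ref{Lem:quant_nilp} and then invoke the classification of filtered quantizations using $H^i(\Orb,\mathcal{O}_\Orb)=0$ for $i=1,2$ (Corollary~\ref{Cor:orb_vanish}), $H^2_{DR}(\Orb)=0$ (Lemma~\ref{Lem:DR_cohom}), and $H^1_{DR}(\Orb)=0$. The only minor difference is that you separate out existence via Theorem~\ref{Thm:main}(1), whereas the paper gets existence and uniqueness simultaneously from the classification (since $H^2_{DR}(\Orb)=\{0\}$ parametrizes exactly one quantization).
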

This lemma also follows from \cite[Theorem 4]{PT}.
Premet has checked in  \cite{Premet3} that for the six orbits listed in (e1) there are
exactly two ideals $\J$ with specified properties.
\begin{proof}
By Corollary \ref{Cor:orb_vanish}, $H^i(\Orb, \mathcal{O}_{\Orb})=0$ for $i=1,2$.
Besides, $H^1_{DR}(\Orb)$ as for any other orbit, and $H^2_{DR}(\Orb)=0$
by Lemma \ref{Lem:DR_cohom}. So there is a unique Hamiltonian quantization of
$\Orb$. The claim of this lemma now follows from Lemma \ref{Lem:quant_nilp}.
\end{proof}

There is a conjectural recipe of how to compute a highest weight for the primitive ideal
$\J$ (at least, under the additional assumption that $\Orb$ is special). 
Namely, let $\Orb^\vee$ be the dual orbit  of $\Orb$ and let $h^\vee$ be a dominant
representative of the semisimple element for the $\mathfrak{sl}_2$-triple for $\Orb^\vee$.
Following \cite{BV}, set $\lambda_{\Orb}:=\frac{1}{2}h^\vee$. Then an expectation is 
that $\lambda_{\Orb}$  is a highest weight for $\J$. This is true for classical
types and special birationally rigid orbits, \cite[Corollary 5.19]{McG2}, and for 
rigid special orbits in exceptional types, \cite[Theorem B]{Premet3}.
Note that for the orbit $A_2+A_1$ in $E_7$, the dual orbit is $E_6(a_1)$,
which is even. So $\lambda_{\Orb}$ in this case is integral. We also remark that \cite[Theorem III]{BV} gives a character
formula (i.e., computes the multiplicities 
of the irreducibles finite dimensional $\g$-modules) 
for $\U/I(\lambda_{\Orb})$, where $I(\lambda_{\Orb})$ denotes the annihilator
of the irreducible module with highest weight $\lambda_{\Orb}$, which holds even
if $\lambda_{\Orb}$ is non-integral. The multiplicities for $\C[\Orb]$
are also known thanks to the work of McGovern, \cite{McG1}. So one could try
to compare the two sets of multiplicities to check if $\U/I(\lambda_{\Orb})$
is multiplicity free. However, this is far from being straightforward as the formulas
are different.

\begin{proof}[Proof of Corollary \ref{Cor:Goldie}]
If $\Orb$ is not one of the orbits in (e2),(e3), then there is a primitive ideal $\J$ with integral
central character and associated variety $\overline{\Orb}$ such that the multiplicity of
$\U/\J$ on $\Orb$ is $1$. It follows that $\Walg$ has an $A$-stable 1-dimensional module.
The claim of the present corollary follows from \cite[Theorem 1.3]{W_dim}.

Consider the three orbits in (e2). Let $\Orb$ be induced from an orbit $\Orb'$ in a Levi
and let $\Walg'$ denote the W-algebra corresponding to $\Orb'$. Recall the exact dimension
preserving functor $\rho:\Walg'\operatorname{-mod}_{fd}\rightarrow \Walg\operatorname{-mod}$,
see \cite[Theorem 1.2.1]{miura}. It maps modules with integral central character
to modules with integral central character by \cite[Corollary 6.4.2]{miura}.
The four orbits in (e2),(e3) are induced from special orbits, this can be shown
using arguments in \cite[Sections 3.3, 3.4]{Fu}. So for all four orbits there
are ($A$-unstable) $\Walg$-modules with integral central characters.
For the orbits in (e2) we are now done by \cite[Theorem 1.3]{W_dim}.
\end{proof}

So there is just one orbit, where a precise relationship between the dimensions
of irreducible $\Walg$-orbits and the Goldie ranks of primitive ideals (with integral
central character) is still unknown. Here $\bar{A}=\mathbb{Z}/2\mathbb{Z}$.
If the left cell $\sigma$ corresponding  to a primitive ideal $\J$ satisfies
$H_{\sigma}=\{1\}$, then the dimension coincides with the Goldie rank, this
follows from the proof of \cite[Theorem 1.2]{W_dim}. If $H_\sigma=\bar{A}$,
then the ratio of the dimension by the Goldie rank is either $1$  or $2$.

\begin{Prop}\label{Prop:Lusztig_quot}
Let $\Orb$ be a birationally rigid special orbit. Then $A\cong \bar{A}$.
\end{Prop}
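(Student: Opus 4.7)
The plan is to prove the claim by case analysis, leveraging the explicit classification of birationally rigid special orbits already established in Proposition \ref{Prop:bir_ind_rigid}. In type $A$ the component group is trivial and nothing is to show, so the actual content lives in the remaining classical types and in the exceptional types.

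For the classical types $B$, $C$, $D$, a birationally rigid orbit $\Orb_\mu$ is characterized by the staircase condition $\mu_i-\mu_{i+1}\leqslant 1$ (cf.\ Lemma \ref{Lem:codim4_class} and the remark immediately after the proof of Proposition \ref{Prop:bir_ind_rigid}). The first step is to write out the standard combinatorial descriptions of the component group $A$ and of Lusztig's quotient $\bar A$ in these types, as found in \cite[Section 13]{Carter} or \cite[Section 6]{CM}. One then checks that under the staircase condition, together with specialness of $\mu$, the natural surjection $A\twoheadrightarrow \bar A$ is forced to be an isomorphism. Structurally, $A$ is an elementary abelian $2$-group indexed by distinct even (resp.\ odd) parts of $\mu$, while the kernel of $A\twoheadrightarrow \bar A$ is generated by relations coming from adjacent parts of a fixed parity; the gap bound $\mu_i-\mu_{i+1}\leqslant 1$, combined with the parity restrictions imposed by specialness, essentially precludes such relations from being nontrivial.

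For the exceptional types, the plan is to invoke the explicit list of birationally rigid special orbits extracted in the proof of Proposition \ref{Prop:bir_ind_rigid}: namely the rigid special orbits, together with the four sporadic orbits $A_2+A_1, A_4+A_1$ in $E_7$ and $A_4+A_1, A_4+2A_1$ in $E_8$. For each entry both $A$ and $\bar A$ are tabulated in \cite[Section 13.4]{Carter}; comparing the columns yields $A\cong\bar A$ in every case. For most entries $A$ is either trivial or $\mathbb{Z}/2\mathbb{Z}$, so the equality is immediate; the remaining cases with $A=S_3$ (e.g.\ $D_4(a_1)$ and its companions in $E_7$ and $E_8$) also satisfy $\bar A = A$ by direct inspection of the tables.

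The main obstacle, and the reason the argument must proceed case-by-case rather than conceptually, is that the geometric input at our disposal does not obviously touch Lusztig's quotient. The $\mathbb{Q}$-factoriality of $\Spec(\C[\Orb])$ from Proposition \ref{Prop:qfacterm} controls only the characters of $Q$ (as already used in Lemma \ref{Lem:DR_cohom}), and the codimension-$4$ conditions used elsewhere in the paper do not visibly interact with the Springer representation that defines $\bar A$. Consequently the proof reduces to combinatorial and tabular bookkeeping; what makes it tractable is the shortness of the list of birationally rigid special orbits in both regimes.
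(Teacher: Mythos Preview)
Your case-by-case plan is sound and would succeed; the paper itself remarks just before the proof that the proposition ``is basically a special case of the description of $\bar{A}$ given in \cite[p.~343]{orange}'', which is essentially the tabular route you outline. But the paper's own proof is \emph{not} case-by-case: it is conceptual, and your last paragraph misdiagnoses the situation.

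The paper argues as follows. Fix a left cell $\sigma$ and the associated primitive ideal $\J$ (central character $\rho$). The number of irreducible HC $\U$-bimodules with both left and right annihilator equal to $\J$ is $|\sigma\cap\sigma^{-1}|=|\operatorname{Irr}(\bar{A})|$. Their images under $\bullet_{\dagger}$ sit inside $\HC^Q_{fin}(\Walg)$, on which the group $A$ acts via the endofunctors $V\otimes\bullet$ for $V$ an $A$-representation. If $A\twoheadrightarrow\bar{A}$ had nontrivial kernel, twisting by a representation of $A$ that does not factor through $\bar{A}$ would take these objects outside the image of $\bullet_{\dagger}$. But birationally rigid orbits satisfy $\operatorname{codim}_{\overline{\Orb}}\partial\Orb\geqslant 4$, so Theorem~\ref{Thm:equi} applies and the image of $\bullet_{\dagger}$ is \emph{all} of $\HC^Q_{fin}(\Walg)$, hence trivially stable under every such twist---a contradiction.

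Thus the codimension-$4$ input, via the equivalence theorem, is precisely the bridge to $\bar{A}$ that you believed was absent. Your approach has the virtue of being elementary and independent of the W-algebra machinery; the paper's approach explains structurally why the equality holds and exhibits it as a corollary of the main theorems, which is exactly why the author advertises it as conceptual.
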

This proposition is basically a special case of the description of $\bar{A}$
given in \cite[p. 343]{orange}. An advantage of our approach is that it 
is conceptual. 
\begin{proof}
We use the notation from Section \ref{SS_W_classif}.
Fix a left cell $\sigma$. The number of irreducible HC bimodules whose left
and right annihilators are the primitive ideal (with central character
$\rho$) corresponding to $\sigma$ is equal to  $\sigma\cap \sigma^{-1}=|\operatorname{Irr}(\bar{A})|$.
Applying $\bullet_{\dagger}$ to these irreducible HC bimodules,
we get sheaves on $\bar{A}/H_\sigma$ whose fibers
at $H_\sigma$ are modules over $H_\sigma$. But any representation
$V$ of $A$ defines an endo-functor $\HC^Q_{fin}(\Walg)\rightarrow
\HC^Q_{fin}(\Walg)$ given by $V\otimes\bullet$. If $A\twoheadrightarrow
\bar{A}$ has a kernel, then the image of $\bullet_{\dagger}$ is not stable
under all functors $V\otimes \bullet$.
\end{proof}

\end{document}